\documentclass[12pt]{article}
\usepackage{amssymb,amsfonts,amsmath,amsthm, mathtools, url}
\usepackage{tikz}
\usetikzlibrary{automata,positioning}
\usepackage{caption, subcaption}
\usepackage[square,numbers, compress]{natbib}
\usepackage[demo,
export]{adjustbox}
\newcommand{\inner}[2]{\langle#1,#2\rangle}

\newcommand{\shape}[1]{\mbox{\it Shape}{\left(#1\right)}}

\newcommand{\odin}{\mbox {\bf 1}}

\newcommand{\pp}{{\mathbb P}}

\newcommand{\nn}{{\mathbb N}}

\newcommand{\E}{{\mathbb E}} 

\newcommand{\rr}{{\mathbb R}}

\newcommand{\cala}{{\mathcal A}}
\newcommand{\calb}{{\mathcal B}}

\newcommand{\calw}{{\mathcal W}}
\newcommand{\calh}{{\mathcal H}}

\newcommand{\call}{{\mathcal L}}
\newcommand{\caln}{{\mathcal N}}

\newcommand{\calt}{{\mathcal T}}

\newcommand{\calm}{{\mathcal M}}

\newcommand{\veps}{\varepsilon}

\newcommand{\calv}{{\mathcal V}}

\newcommand{\diag}[1]{Diag\left(#1\right)}

\newcommand{\norm}[1]{\|#1\|}
\newcommand{\beq}{\begin{eqnarray*}}
	\newcommand{\feq}{\end{eqnarray*}}
\newcommand{\beqn}{\begin{eqnarray}}
	\newcommand{\feqn}{\end{eqnarray}}
\newcommand{\as}{\mbox{\rm a.\,s.}}

\newtheorem{theorem}{Theorem}
\makeatletter \@addtoreset{theorem}{section}\makeatother

\newtheorem{lemma}[theorem]{Lemma}

\newtheorem*{theorem*}{Theorem}
\newtheorem*{conj*}{Conjecture}

\newtheorem{proposition}[theorem]{Proposition}
\newtheorem{corollary}[theorem]{Corollary}
\newtheorem{remark}[theorem]{Remark}
\newtheorem{example}[theorem]{Example}
\usepackage{algorithm}
\usepackage{graphicx}
\usepackage[noend]{algpseudocode}
\makeatletter
\def\BState{\State\hskip-\ALG@thistlm}
\makeatother

\DeclareCaptionLabelFormat{noname}{#2}
\newlength\myindent
\title{Lyapunov Exponents for Sparsely Coupled Linear Cocycles}

\author{
	Reza~Rastegar\thanks{Woodinville, WA; e-mail:  reza.j.rastegar@gmail.com}
}

\begin{document}
	\maketitle
	
	\begin{abstract}
		This paper studies structured products of real matrices for which the top Lyapunov exponent can be accessed by reducing the dynamics to an amenable generalization of upper triangular matrices.  Exploiting prescribed zero patterns (including block-triangularity and sparse decompositions, conveniently encoded by a directed sparsity graph), we obtain explicit, computable bounds and, in favorable cases, formulas for $\gamma_1$ by combining deterministic triangular controls with a suitable refinement of the Furstenberg--Kifer lemma \cite{fifa} for block-triangular products.  The estimates apply both to tempered (possibly deterministic) sequences and to stationary ergodic random cocycles under standard integrability.  We also discuss applications to perturbation models for linear systems, including low-rank updates, where the reduction converts the problem to lower-dimensional or scalar cocycles.
	\end{abstract}
	
	{\em MSC2010: } 37H15, 37M25.\\
	\noindent{\em Keywords}: Lyapunov exponents, random triangular matrices, random block matrices, zero entry pattern, graph decomposition, matrix perturbations.
	
	\section{Introduction}

	Lyapunov exponents play a central role in the analysis of random and chaotic dynamical systems: they quantify sensitivity to initial conditions and, in the linear-cocycle setting, encode long-time growth rates of matrix products; see, e.g., \cite{arnold, barl, boug, fuks, viana}.  Despite their importance, explicit formulas are available only for a comparatively small collection of solvable ensembles---classically, many computations focus on $2\times 2$ products or on highly symmetric distributions (including several Gaussian-type models); see, for instance, \cite{rankone, com, comt, elf, fibbo, keyc, kiev, letac, lima, darm, mann, mark, pinkus} and \cite{adams, ahn, aker, cohen, fort, ens, kargin, newman}.  The purpose of this paper is to extend and reorganize a few ideas from \cite{rankone, fifa, kiev, pinkus} into a small toolkit for obtaining \emph{computable} bounds (and, in favorable cases, explicit values) for Lyapunov exponents of structured matrix products.  From a matrix-analysis viewpoint, the guiding principle is that triangular or block-triangular structure---and more generally prescribed sparsity/zero patterns with limited feedback---localize exponential growth to lower-dimensional diagonal dynamics, while the remaining entries contribute at most an explicitly controlled combinatorial factor.
	Concretely, our bounds are expressed in terms of diagonal-block products together with a directed graph encoding admissible off-diagonal couplings, and are therefore readily computable from a small collection of lower-dimensional norm estimates.
	The framework covers both stationary/ergodic matrix sequences and deterministic sequences satisfying the temperedness condition \eqref{xn}, so it can be read as a matrix-analytic toolkit for growth-rate bounds in structured or sparsely coupled linear systems.
	
	\par
	For an integer $d\ge 2$, let $\calm_d$ denote the space of $d\times d$ matrices with real-valued entries.  Throughout, we write $\|\cdot\|$ both for a generic norm on $\rr^d$ (when the particular choice is immaterial) and for the induced operator norm on $\calm_d$.  Let $\cala:=(A_n)_{n\in\nn}$ be a sequence in $\calm_d$ such that
	\beqn \label{xn}
	\limsup_{n\to\infty} \frac{1}{n}\log \norm{X_n} < \infty\qquad \mbox{\rm where} \qquad
	X_n := A_n \cdots A_1.
	\feqn
	We associate to $\cala$ the (upper) Lyapunov exponent function $\gamma:\rr^d\to\rr\cup\{-\infty\}$ defined by
	\beqn
	\label{ld}
	\gamma(v) := \limsup_{n\to\infty}  \frac{1}{n} \log \norm{X_n v}.
	\feqn
	In particular, we define the \emph{top (upper) Lyapunov exponent} of the sequence $\cala$ by
	\beqn \label{gamma1_def}
	\gamma_1(\cala)
	:= \limsup_{n\to\infty}\frac1n\log\|X_n\|
	= \sup_{\|v\|=1}\gamma(v).
	\feqn
	
	This directional growth rate is the basic object we aim to control in the structured regimes considered below.
	
	\par
	A particularly clean structure emerges when $\cala$ is random, stationary, and ergodic and satisfies the standard integrability condition
	\beqn \label{f_cond}
	\E\left( \log^+ \|A_1\| \right) < \infty,
	\feqn
	where, for $x>0$, $\log^+ x:=\max(0,\log x)$, with the convention $\log^+0:=0$.  Oseledets' multiplicative ergodic theorem then yields deterministically constant Lyapunov exponents
	\[
	\gamma_1>\cdots>\gamma_\ell,\qquad \ell\le d,
	\]
	together with a corresponding filtration of subspaces
	\[
	\rr^d = E_1 \supset E_2 \supset \cdots \supset E_{\ell} \supset E_{\ell+1}:=\{0\},
	\]
	such that the limsup in \eqref{ld} is in fact a limit and $\gamma(v)\in\{\gamma_1,\dots,\gamma_\ell\}$ almost surely, with the value determined by the position of $v$ in the filtration; see \cite[Section~2.1]{barr} and \cite{arnold, barl, viana}.  In particular, the exponents and subspaces are independent of the realization with probability one.
	
	\par
	Even in this ergodic framework, however, the exponents $\gamma_k$ are rarely available in closed form.  A notable exception occurs for triangular cocycles, where the diagonal entries dictate the asymptotic growth.  In particular, for upper triangular matrices we have:
	\begin{proposition}
		\label{pkus}
		\cite{pinkus}
		Let $\cala=(A_k)_{k\in\nn}\in\calm_d^\nn$ be a random, stationary, and ergodic sequence of $d\times d$ invertible matrices with $A_k(i,j) = 0$ for $i > j$ and such that \eqref{f_cond} holds. Then,
		\beq
		\gamma_1 = \max_{i\in [d]} \E \left( \log |A_1(i,i)|\right),
		\feq
		where $[d]:=\{1,\ldots,d\}.$
	\end{proposition}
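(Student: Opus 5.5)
Set $D_k(i):=A_k(i,i)$ and $\lambda_i:=\E\log|A_1(i,i)|$, and argue in two directions: a lower bound $\gamma_1\ge\max_i\lambda_i$ that follows from triangularity alone, and a matching upper bound that we obtain by induction on $d$. Note first that $\E\log^+|A_1(i,i)|\le\E\log^+\|A_1\|<\infty$, so each $\lambda_i\in[-\infty,\infty)$ is well defined.

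\emph{Lower bound.} Products of upper triangular matrices are upper triangular, and their diagonal is the product of the diagonals. Hence
\[
X_n(i,i)=\prod_{k=1}^n A_k(i,i).
\]
For a fixed equivalent norm this gives $\|X_n\|\ge c\,|X_n(i,i)|$. Birkhoff's ergodic theorem, in its extended form allowing the value $-\infty$ when the positive part is integrable, yields
\[
\frac1n\log|X_n(i,i)|\to\lambda_i \quad\text{a.s.}
\]
Therefore $\gamma_1\ge\max_i\lambda_i$.

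\emph{Upper bound, by induction on $d$.} The case $d=1$ is Birkhoff's theorem. For $d\ge2$, write each matrix in block form
\[
A_k=\begin{pmatrix} B_k & c_k\\ 0 & a_k\end{pmatrix},
\]
where $B_k$ is $(d-1)\times(d-1)$ upper triangular, $c_k\in\rr^{d-1}$, and $a_k=A_k(d,d)$. The product then has the form
\[
X_n=\begin{pmatrix} B_n\cdots B_1 & u_n\\ 0 & a_n\cdots a_1\end{pmatrix},
\qquad
u_n=\sum_{k=1}^n B_n\cdots B_{k+1}\,c_k\,a_{k-1}\cdots a_1 .
\]
By the induction hypothesis, applied to the stationary ergodic sequence $(B_k)$ which satisfies $\log^+\|B_1\|\le\log^+\|A_1\|+C$, the top exponent of $(B_k)$ is $\max_{i<d}\lambda_i$. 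Let $\lambda:=\max_i\lambda_i$. It remains to prove $\limsup\frac1n\log\|u_n\|\le\lambda$ a.s.

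\emph{Bounding $u_n$ (the main obstacle).} The difficulty is that the inner products $B_n\cdots B_{k+1}$ start at time $k+1$, not at time $0$, so a pointwise a.s. limit for products started at $0$ does not control them uniformly in $k$. My plan is a tempered-random-variable argument. First, $\E\log^+\|c_1\|<\infty$ together with Borel--Cantelli gives $\frac1k\log^+\|c_k\|\to0$ a.s. Second, using Kingman's subadditive ergodic theorem for the shifted cocycles, I will show that for every $\veps>0$ there is an a.s. finite random constant $C_\veps$ with
\[
\|B_n\cdots B_{k+1}\|\le C_\veps\, e^{(\lambda+\veps)(n-k)+\veps n}
\quad\text{for all } 0\le k\le n .
\]
One route to this is to cite the Furstenberg--Kifer lemma \cite{fifa}. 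Alternatively it follows from a Kingman-based estimate on $\max_{k\le n}\frac1n\log\|B_n\cdots B_{k+1}\|$ of the kind used by Pinkus. Third, for the scalar factor Birkhoff gives $|a_{k-1}\cdots a_1|\le C'_\veps e^{(\lambda_d+\veps)k}$. Each of the $n$ summands in $u_n$ is then at most $e^{(\lambda+3\veps)n}$ times a random constant. Summing $n$ such terms costs only a factor $n$, so $\limsup\frac1n\log\|u_n\|\le\lambda+3\veps$. Letting $\veps\to0$ and combining with the two diagonal blocks gives $\gamma_1\le\lambda$.

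If the uniform-in-$k$ estimate proves awkward, a fallback avoids it entirely. The Oseledets filtration applied to $(A_k)$ and to its inverse (invertibility is assumed) identifies the Lyapunov spectrum with the multiset $\{\lambda_i\}$, via the invariant flag $\mathrm{span}(e_1,\dots,e_j)$: the exponent induced on each one-dimensional quotient of this flag is exactly $\lambda_j$. Since the top exponent is the maximum over the spectrum, this again gives $\gamma_1=\lambda$.
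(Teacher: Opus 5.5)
Your lower bound is correct (it is the diagonal argument behind Proposition~\ref{thm100}), and your recursion for $u_n$ is right. The upper bound, however, has a genuine gap exactly at the step you flag. The estimate $\|B_n\cdots B_{k+1}\|\le C_\veps e^{(\lambda+\veps)(n-k)+\veps n}$, uniformly in $0\le k\le n$, does not follow from your induction hypothesis, which only controls products started at time $0$. None of the justifications you list supplies it:
\begin{itemize}
\item Lemma~\ref{blockt} requires $\E\log^+\|B_1(i,i)^{-1}\|<\infty$, which is not a hypothesis here. It also says nothing directly about products started at time $k$. And applied to $(A_k)$ with $1\times 1$ blocks it is already the whole statement, so the induction would be idle.
\item An unspecified ``Kingman-based estimate'' is not an argument.
\item Your fallback has the same defects. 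Oseledets for the inverse cocycle needs $\E\log^+\|A_1^{-1}\|<\infty$. The assertion that the spectrum is the multiset of exponents on the one-dimensional quotients of the invariant flag is essentially the proposition itself (Corollary~1 of \cite{hen}).
\end{itemize}
The step is not routine. The natural trick $\|B_n\cdots B_{k+1}\|\le\|B_n\cdots B_1\|\,\|(B_k\cdots B_1)^{-1}\|$ yields the rate $\lambda' n-\lambda'' k$, where $\lambda',\lambda''$ are the top and bottom exponents of $(B_k)$. That is the correct rate only when the block is scalar.

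The missing idea is cheap: carry the uniform estimate itself through the induction, i.e.\ prove $\|A_n\cdots A_{k+1}\|\le C_\veps e^{(\lambda+\veps)(n-k)+\veps n}$ for all $0\le k\le n$.
\begin{itemize}
\item For $d=1$, set $S_n:=\sum_{t\le n}\log|a_t|$. Birkhoff gives $S_n\le(\lambda_d+\veps)n+C$ and $S_k\ge(\lambda_d-\veps)k-C$, hence $S_n-S_k\le\lambda_d(n-k)+2\veps n+2C$. If $\lambda_d=-\infty$, first replace $\log|a_t|$ by $\max(\log|a_t|,-M)$.
\item In the inductive step, the off-diagonal block of $A_n\cdots A_{k+1}$ is $\sum_{j=k+1}^{n}B_n\cdots B_{j+1}\,c_j\,a_{j-1}\cdots a_{k+1}$. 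Your three estimates, now with an arbitrary starting time, give the same bound with $\veps$ replaced by a fixed multiple of it.
\end{itemize}
This is, in unrolled form, the paper's argument at the beginning of Section~\ref{shape}. There $X_n$ is expanded into monomials with at most $d-1$ strictly upper triangular factors, so only scalar diagonal segments $\prod_{t=i}^{j}|A_t(p,p)|$ (differences of Birkhoff sums) need uniform control; these are the constants $G_\veps$.

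Your Kingman route can also be made rigorous, without triangularity. Let $\theta$ be the underlying shift, and replace $\lambda$ by any real number if $\lambda=-\infty$. Define $g:=\sup_{m\ge0}\bigl(\log\|B_m\cdots B_1\|-(\lambda+\veps)m\bigr)$.
\begin{itemize}
\item $g$ is a.s.\ finite and nonnegative, and satisfies $g\le g\circ\theta+(\log\|B_1\|-\lambda-\veps)^+$.
\item So $g\circ\theta-g$ has integrable negative part, and Birkhoff gives $\frac1k(g\circ\theta^k-g)\to\E(g\circ\theta-g)\in(-\infty,\infty]$.
\item This constant must be $0$, because $g\circ\theta^k$ has the law of $g$ and $g\ge 0$. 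Hence $g\circ\theta^k/k\to0$ a.s.
\end{itemize}
That is exactly the uniform bound you need, since $\log\|B_n\cdots B_{k+1}\|\le(\lambda+\veps)(n-k)+g\circ\theta^k$.
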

	In fact, under the conditions of the proposition, all Lyapunov exponents of $\cala$ can be identified as distinct elements of the multi-set $\{\E \left( \log |A_1(i,i)|\right)\}_{i=1}^d$ (see Corollary~1 in \cite{hen} and item (iv) on p.~130 of \cite[Section~3.2]{arnold}).
	
	\par
	A conceptual explanation for Proposition~\ref{pkus} is provided by a lemma of Furstenberg and Kifer \cite{fifa} (quoted here as Lemma~\ref{blockt}), which reduces the computation of the top Lyapunov exponent for block-triangular products to that of the diagonal blocks; see, for example, \cite{fifa,kifer,blockas,blockus,tom}.  Extensions appear in several directions, including switched systems \cite{ots, solvable, new} and linear cocycles in bundles \cite[Lemma~3.6]{kifer}; see also \cite[Proposition~1]{blockas}, \cite[Theorem~1.1]{blockus} and \cite[Lemma~4.9]{tom}.  The starting point of this paper is that the same block-triangular viewpoint can be packaged into a practical toolkit that continues to be useful beyond the classical stationary-random setting.  In particular, the resulting bounds are explicit in terms of diagonal-block growth rates and simple combinatorial parameters of the zero pattern, which makes them amenable to computation in matrix-analytic applications.
	
	\par
	Our contributions are organized around four structured regimes.  Section~\ref{block} begins with deterministic control of upper triangular products: for general tempered sequences (not necessarily random), Proposition~\ref{thm100} gives a two-sided estimate for $\gamma_1$ in terms of diagonal growth rates $\alpha_i^\pm$, and Corollary~\ref{cora} recovers the familiar ``maximum of diagonal exponents'' identity when the relevant diagonal averages converge.  In the same section we develop a deterministic Furstenberg--Kifer-type reduction for block-triangular products under Lyapunov-regularity hypotheses on the diagonal blocks (Proposition~\ref{thm101}), providing a modular step that reduces $\gamma_1$ to diagonal-block data plus an explicit error term.
	
	Section~\ref{per_sect} illustrates how these reductions turn concrete perturbation models (including rank-one updates) into lower-dimensional or scalar cocycles with computable Lyapunov exponents, and it clarifies when a perturbation changes only the top exponent.  Finally, Section~\ref{shape} introduces shape graphs for sparse decompositions with disjoint supports and limited feedback, proves an energy--entropy bound (Theorem~\ref{th-shapes}) of the form $\gamma_1(\cala)\le \beta+\log k$ (or $\beta+\log k_*$ when $O_d\in\calv$), and provides a short ``how-to'' guide with worked examples in Section~\ref{sec:howto-shapes}, including a transfer-matrix/DAG model that makes the mechanism transparent.

	\section{Block-triangular matrices}
	\label{block}	
	We begin with the following inequality concerning the top Lyapunov exponent of a given sequence of upper triangular matrices (cf. \cite[Lemma~3.1.4]{barl}).
	\begin{proposition}
		\label{thm100}
		Let $\cala:=(A_n)_{n\in\nn}$ be a sequence of real, invertible upper triangular $d\times d$ matrices (that is $A_n(j,k)=0$ for $j>k$ and $A_n(j,j)\neq 0,$ $j,k\in [d]$) such that
		\beqn
		\label{small}
		\limsup_{n\to\infty}\frac{1}{n}\log \|A_n\|=0.
		\feqn
		Then,
		\beqn
		\label{D_ineq}
		\max_{j\in [d]} \alpha_j^+\leq \gamma_1
		\leq \max_{j\in [d]} \Big(\alpha_j^+  +\sum_{r=1}^{j-1} \big(\alpha_r^+-\alpha_r^-\big)\Big),
		\feqn
		where
		$
		\alpha_i^-:= \liminf_{n\to\infty} \frac{1}{n} \sum_{k=1}^n \log |A_k(i,i)|$ and $\alpha_i^+:= \limsup_{n\to\infty} \frac{1}{n} \sum_{k=1}^n \log |A_k(i,i)|.
		$
	\end{proposition}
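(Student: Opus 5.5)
The plan is to write the entries of $X_n$ explicitly and estimate them by induction on the column index. Since the $A_k$ are upper triangular, so is $X_n$, and its diagonal entries are $X_n(j,j)=\prod_{k=1}^n A_k(j,j)$.

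\emph{Lower bound.} Any operator norm dominates (up to a constant depending only on $d$ and the chosen norm) the absolute value of each entry. Hence
\[
\|X_n\|\ge c\,|X_n(j,j)| = c\exp\Big(\sum_{k=1}^n\log|A_k(j,j)|\Big),
\]
and taking $\limsup\frac1n\log$ gives $\gamma_1\ge \alpha_j^+$ for every $j$.

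\emph{Upper bound.} Since all norms are equivalent, it suffices to bound every entry $X_n(i,j)$ with $i\le j$. Expanding the product, $X_n(i,j)$ is a sum over monotone index paths $i=i_0\le i_1\le\dots\le i_n=j$, where each step either stays put (a diagonal factor) or jumps to a larger index (an off-diagonal factor, bounded by $\|A_k\|\le e^{\epsilon k}$ for large $k$ by \eqref{small}). A path has at most $j-i$ jumps, at times $t_1<\dots<t_m$. Between consecutive jumps it sits at a fixed row $r$ and picks up $\prod_{k=s+1}^{t}A_k(r,r)$. By the definition of $\alpha_r^\pm$, for any $\epsilon>0$ and all $0\le s<t\le n$,
\[
\log\prod_{k=s+1}^{t}|A_k(r,r)| = S_r(t)-S_r(s)\le (\alpha_r^+ +\epsilon)t-(\alpha_r^- -\epsilon)s + C_\epsilon ,
\]
where $S_r(t)=\sum_{k\le t}\log|A_k(r,r)|$. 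This uses $S_r(t)\le(\alpha_r^++\epsilon)t+C$ and $S_r(s)\ge(\alpha_r^--\epsilon)s-C$.

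Then I telescope along the path through rows $r_0<r_1<\dots<r_m=j$ (so $r_0=i$). The final segment contributes at most $(\alpha_j^++\epsilon)n$ minus a $\alpha_j^-$ term. Each intermediate row $r$ contributes $\alpha_r^+t-\alpha_r^-s\le(\alpha_r^+-\alpha_r^-)n+\alpha_r^- (t-s)$, and the leftover $\alpha_r^-$ pieces must be absorbed. This absorption is the main obstacle: the signs of the $\alpha$'s are uncontrolled, so a naive bound fails.

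I would handle it with an induction on $j$ that asserts
\[
\limsup\tfrac1n\log\max_{i\le j,\,m\le n}|X_m(i,j)|\le \beta_j := \max_{r\le j}\Big(\alpha_r^+ +\sum_{q<r}(\alpha_q^+-\alpha_q^-)\Big).
\]
For the inductive step I use the recursion $X_n(i,j)=\sum_{t}\big(\prod_{k=t+1}^n A_k(j,j)\big)\sum_{i\le l<j}A_t(l,j)X_{t-1}(i,l)$. Bounding the diagonal product by $e^{(\alpha_j^++\epsilon)n-(\alpha_j^--\epsilon)t}$ gives a term at most
\[
(\alpha_j^+-\alpha_j^-)\,n + \alpha_j^-(n-t)+\beta_{j-1}t .
\]
Since this is linear in $t\in[0,n]$, its maximum is attained at an endpoint. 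At $t=n$ the value is $(\alpha_j^+-\alpha_j^-)n+\beta_{j-1}n$. At $t=0$ the value is $\alpha_j^+ n$, which is directly at most $\beta_j n$. I then need $\beta_{j-1}+\alpha_j^+-\alpha_j^-$ to be dominated by the stated maximum. If not, I refine the estimate by bounding the diagonal segment directly as $S_j(n)-S_j(t)\le \max\big(\alpha_j^+n-\alpha_j^-t,\ \ldots\big)$ and re-indexing which row carries the "$+$" term. This follows the standard argument of \cite[Lemma~3.1.4]{barl}, whose bookkeeping yields exactly the right-hand side of \eqref{D_ineq}.

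The remaining factors are harmless. There are polynomially many paths and at most $n$ choices of each $t$, and the $e^{\epsilon n}$ losses come from the off-diagonal entries via \eqref{small}. All of these vanish after taking $\frac1n\log$ and letting $\epsilon\to0$.
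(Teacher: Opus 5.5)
Your lower bound is correct. It is also more direct than the paper's route, which goes through Weyl's inequality $|\lambda_1(X_n)|\le\sigma_1(X_n)$ and $\ell_1^{\sup}=\gamma_1$.

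The upper bound, however, has a genuine gap: the time order is reversed. Your recursion takes $\sum_{l<j}A_t(l,j)X_{t-1}(i,l)$ and then multiplies by the column-$j$ factors $A_{t+1}(j,j)\cdots A_n(j,j)$. That unrolls $P_n=P_{n-1}A_n$, so it describes $P_n=A_1A_2\cdots A_n$, not $X_n=A_n\cdots A_1$. The same slip appears in your path expansion. There $i_n=j$ is attached to $A_1$, so the walk occupies row $j$ during the \emph{first} time steps and row $i$ during the \emph{last} ones. The row-$j$ segment therefore costs only $S_j(t_1)\le(\alpha_j^++\epsilon)t_1+C$. The $\alpha^-$ terms come from the rows entered later, which have \emph{smaller} indices.

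With your orientation, the endpoint $t=n$ produces $\beta_{j-1}+\alpha_j^+-\alpha_j^-$, a correction attached to a larger index. This is not dominated by the right-hand side of \eqref{D_ineq}: for $\alpha_1^\pm=5$, $\alpha_2^+=1$, $\alpha_2^-=0$ it gives $6$ against $5$.

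No re-indexing can save this, because for $A_1\cdots A_n$ the bound is actually false. Take $A_k(1,1)=e^5$, $A_k(1,2)=1$ and $A_k(2,2)=e^{a_k}$. Let $S_2(n)=\sum_{k\le n}a_k$ be $0$ except that it rises linearly to $n_m$ on $[n_m-\sqrt{n_m},n_m]$ and drops back to $0$ on the next window of the same length, for a sparse sequence $n_m$. This sequence is tempered and has exactly these $\alpha$'s. Yet the term $t=n_m-\sqrt{n_m}$ of $(A_1\cdots A_{n_m})(1,2)=\sum_t e^{5(t-1)+S_2(n_m)-S_2(t)}$ grows at rate $6$.

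Your closing appeal to \cite[Lemma~3.1.4]{barl} is exactly what the paper does, so as a citation it is acceptable. But it replaces the argument you wrote rather than completing it.

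The repair is to peel off $A_n$ on the left, $X_n=A_nX_{n-1}$. For $i<j$ this gives
\[
X_n(i,j)=\sum_{t=1}^{n}\Big(\prod_{k=t+1}^{n}A_k(i,i)\Big)\sum_{l=i+1}^{j}A_t(i,l)\,X_{t-1}(l,j).
\]
Then induct \emph{downward} in the row $i$ for fixed $j$, as the paper does in the proof of Proposition~\ref{thm101}. Put $c_l:=\max_{l\le p\le j}\big(\alpha_p^++\sum_{r=l}^{p-1}(\alpha_r^+-\alpha_r^-)\big)$. This quantity is nonincreasing in $l$, satisfies $c_j=\alpha_j^+$, and obeys $c_i=\max\big(\alpha_i^+,\,c_{i+1}+\alpha_i^+-\alpha_i^-\big)$.

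Assume $\limsup\frac1n\log|X_n(l,j)|\le c_l$ for all $l>i$. Then the logarithm of the $t$-th term is at most $\alpha_i^+n+(c_{i+1}-\alpha_i^-)t+O(\epsilon n)+C_\epsilon$. This is linear in $t$, so it is at most $c_in+O(\epsilon n)+C_\epsilon$. The induction therefore closes with no absorption problem and gives $\limsup\frac1n\log|X_n(i,j)|\le c_i\le c_1$, which is at most the right-hand side of \eqref{D_ineq}.

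Equivalently, in your path picture with the correct orientation, the exponent is linear in the jump times $0\le t_1\le\cdots\le t_m\le n$. Its maximum over this simplex is attained at a vertex, which yields $\alpha_\rho^+ +\sum(\alpha_r^+-\alpha_r^-)$, summed over the rows $r<\rho$ visited after $\rho$.
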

	\begin{proof}
		The upper bound is an immediate consequence of Lemma~3.1.4 in \cite{barl} which shows that under the conditions of the proposition,
		\beq
		\limsup_{n\to\infty} \frac{1}{n} \log |X_n(i,j)|\leq \alpha_j^+ +\sum_{r=1}^{j-1} \big(\alpha_r^+-\alpha_r^- \big), \qquad \forall i,j\in [d].
		\feq
		The proof of the lower bound given in the following uses a connection between the growth rates of singular values and the Lyapunov exponents of $X_n$. For $A\in \calm_d,$ let $\sigma_1(A) \geq \cdots \geq \sigma_d(A)$ be singular values of $A,$ that is, eigenvalues of $(A^TA)^{1/2},$ where $A^T$ is the transpose of $A$. Since all norms on $\rr^d$ are equivalent, the top Lyapunov exponent $\gamma_1(\cala)$ does not depend on the particular choice of operator norm; in particular we may use the Euclidean norm when discussing singular values.
		Set
		\beqn
		\label{eli}
		\ell^{\sup}_i := \limsup_{n\to\infty} \frac{1}{n} \log \sigma_i(X_n) \qquad \mbox{\rm and} \qquad
		\ell^{\inf}_i := \liminf_{n\to\infty} \frac{1}{n} \log \sigma_i(X_n).
		\feqn
		For a general theory of these growth rates and their relation to the Lyapunov exponents in the case when $\cala=(A_n)_{n\in\nn}$ is non-random, see \cite{arnold, barr}. If $\cala$ is a random, stationary and ergodic sequence and \eqref{f_cond} holds, then (see, for instance, Theorem~3.4.1 in \cite{arnold}),
		\beq
		\ell^{\sup}_i= \ell^{\inf}_i=\gamma_i,\qquad \forall i\in [d],\quad \as
		\feq
		For a non random sequence $\cala,$ it has been shown in \cite{barr} that
		\beqn \label{u_bound1}
		\qquad \ell_1^{\sup}= \gamma_1 \qquad \mbox{\rm and}  \qquad \ell_i^{\sup} \leq \gamma_i \quad \mbox{for}~i>1,
		\feqn
		and, furthermore, the above inequalities may be strict. However, it turns out \cite{barl, barr} that if \eqref{small} holds (such sequences are referred to as tempered in \cite{barl,barr}) and the matrices in $\cala$ are invertible, then following properties are equivalent:
		\beqn
		\label{lr}
		\mbox{\rm (a)\, $(X_n^TX_n)^{\frac{1}{2n}}$ converges as $n\to\infty$ \quad and \quad (b)\, $\gamma_i=\ell_i^{\inf}=\ell_i^{\sup}$ for all $i\in [d].$}
		\feqn
		A sequence $\cala$ that satisfies these conditions is called Lyapunov regular, see \cite{barl} for a number of other equivalent conditions (cf. Remark~\ref{reb} below).
		\medskip
		
		\noindent\emph{Proof of lower bound:}
		Let $\lambda_1(X_n)$ be the largest by absolute value eigenvalue of $X_n.$ By Weyl's inequality \cite{horn}, $|\lambda_1(X_n)| \leq \sigma_1(X_n).$
		Since $X_n(j,j)=\prod_{i=1}^n A_i(j,j),$
		\beq
		\log |\lambda_1(X_n)|=\max_{j\in [d]} \sum_{i=1}^n  \log |A_i(j,j)|\leq \log \sigma_1(X_n),
		\feq
		which implies the lower bound in \eqref{D_ineq} by virtue of \eqref{eli} and the identity in \eqref{u_bound1}.
	\end{proof}

	Proposition~\ref{thm100} makes precise a common heuristic: for an upper triangular product, the diagonal controls the exponential growth, while off-diagonal terms can only create \emph{transient} amplification.
	The quantities $\alpha_i^\pm$ record the asymptotic growth rates of the diagonal cocycles along each coordinate.
	When the diagonal averages converge (so $\alpha_i^+=\alpha_i^-$), Corollary~\ref{cora} recovers the clean identity $\gamma_1=\max_i \alpha_i$.
	When they do not, the additional correction term $\sum_{r<j}(\alpha_r^+-\alpha_r^-)$ measures the maximal ``spread'' between limsup and liminf along earlier coordinates; this is exactly the amount of slack needed to control how often off-diagonal couplings can amplify the $j$th diagonal growth.
	In the stationary ergodic case the spread vanishes almost surely, so the proposition can be viewed as a deterministic envelope around the classical result.
	
	A deterministic analogue of Proposition~\ref{pkus} can now be stated as follows:
	
	\begin{corollary}
		\label{cora}
		Let the conditions of Proposition~\ref{thm100} hold for $\cala=(A_n)_{n\in\nn}.$ Suppose in addition that $\alpha_i^+=\alpha_i^-$ for all $i\in [d].$
		Then,
		$
		\gamma_1=\max_{j\in [d]} \lim_{n\to\infty} \frac{1}{n} \sum_{i=1}^n \log |A_i(j,j)|.
		$
	\end{corollary}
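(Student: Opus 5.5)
The corollary should follow directly from the two-sided estimate \eqref{D_ineq} of Proposition~\ref{thm100}. The plan is to substitute the extra hypothesis $\alpha_i^+=\alpha_i^-$ into both sides and check that they coincide.

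First, I will note that under the hypothesis $\alpha_i^+=\alpha_i^-$, the liminf and limsup of $\frac1n\sum_{k=1}^n\log|A_k(i,i)|$ agree. Hence the limit
\[
\alpha_i:=\lim_{n\to\infty}\frac1n\sum_{k=1}^n\log|A_k(i,i)|
\]
exists in $[-\infty,\infty)$, and $\alpha_i=\alpha_i^+=\alpha_i^-$. The value cannot be $+\infty$, because $|A_k(i,i)|\le\|A_k\|$ up to a norm-equivalence constant, and \eqref{small} then forces $\alpha_i^+\le 0$.

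Second, I will look at the correction terms $\alpha_r^+-\alpha_r^-$ in the upper bound of \eqref{D_ineq}; by the hypothesis each of them vanishes. The only delicate point is the degenerate case $\alpha_r=-\infty$, where the difference reads $-\infty-(-\infty)$. There are two ways to handle it: either observe that Lemma~3.1.4 of \cite{barl} is stated for finite diagonal rates, or exclude that case by noting that the lower bound $\max_j\alpha_j^+\le\gamma_1$ together with a direct argument handles it.

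I expect this degenerate case to be the only real obstacle. My first approach will be to restrict to the situation where all $\alpha_i$ are finite, which holds for instance when the $\log|A_k(i,i)|$ are bounded below on average. If needed, I will remark that a coordinate with $\alpha_r=-\infty$ only shrinks the relevant entries of $X_n$, so it cannot raise the bound.

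Once the correction terms vanish, the upper bound in \eqref{D_ineq} becomes $\max_j\alpha_j^+$, which equals the lower bound. Therefore $\gamma_1=\max_{j\in[d]}\alpha_j$, and this is exactly the claimed formula.
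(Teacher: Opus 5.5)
Your main line (substitute $\alpha_i^+=\alpha_i^-$ into \eqref{D_ineq}, observe that the correction terms vanish, and match the two bounds) is exactly the paper's implicit derivation, and it is correct whenever every $\alpha_i$ is finite. Two of your side remarks are wrong, though. First, \eqref{small} does not force $\alpha_i^+\le 0$, nor even $\alpha_i^+<\infty$. It only gives the termwise bound $\log|A_k(i,i)|\le o(k)$, which does not control the Ces\`aro averages. For example, $A_n=2I_d$ satisfies \eqref{small} with $\alpha_i^+=\log 2$, and $A_n=e^{\sqrt n}I_d$ satisfies it with $\alpha_i^+=+\infty$. The right way to dispose of $+\infty$ is the lower bound in \eqref{D_ineq}: $\alpha_j^+\le\gamma_1<\infty$ by the standing assumption \eqref{xn}. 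Even without \eqref{xn}, an index with $\alpha_j=+\infty$ forces $\gamma_1=+\infty$, so the formula holds trivially.

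Second, and more seriously, your fallback for $\alpha_r=-\infty$ (that such a coordinate ``only shrinks the relevant entries of $X_n$'') is false, and in that case the corollary itself fails. Off-diagonal entries of $X_n$ are built from \emph{segment} products $\prod_{i=k+1}^{n}A_i(r,r)$, and a Ces\`aro average tending to $-\infty$ does not make these small. Here is a counterexample.
\begin{itemize}
\item[] Take $d=2$, $A_n=\begin{bmatrix} a_n & 1\\ 0 & 1\end{bmatrix}$ with $a_n>0$, and $S_n:=\sum_{i\le n}\log a_i$, so that $X_n(1,2)=\sum_{k=1}^{n}e^{S_n-S_k}$.
\item[] Fix $\lambda>0$, let $n_j:=4^j$ and $k_j:=n_j-2^j$ for $j\ge 2$. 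Prescribe $S_n:=-n\log(n+1)-\phi(n)$, where $\phi(n):=\lambda 2^j(n_j-n)$ for $k_j\le n\le n_j$ and $\phi(n):=0$ otherwise.
\item[] Then $S_n/n\le-\log(n+1)$, so $\alpha_1^\pm=-\infty$, while $\alpha_2^\pm=0$.
\item[] Moreover $\log a_n\le\lambda\sqrt{2n}$, so \eqref{small} holds. Also $S_n-S_k\le\phi(k)\le 2\lambda n$ for $k\le n$, so \eqref{xn} holds as well.
\item[] Yet $S_{n_j}-S_{k_j}\ge\lambda n_j-2^j\bigl(\log(n_j+1)+1\bigr)$, hence $\gamma_1\ge\lambda>0=\max(\alpha_1,\alpha_2)$.
\end{itemize}
So finiteness of the $\alpha_i$ is not a convenience you can later remove; it is a necessary hypothesis. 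The paper assumes it tacitly as well, since otherwise $\alpha_r^+-\alpha_r^-$ in \eqref{D_ineq} is undefined. State it explicitly and drop the fallback.
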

	We next state an extension of Proposition~\ref{pkus} to block triangular matrices
	in the form
	\beqn \label{block_def}
	A_n = \begin{bmatrix}
		B_n(1,1) & B_n(1,2) & \dots & B_n(1,m) \\
		O(2,1) & B_n(2,2) & \dots & B_n(2,m) \\
		\vdots & \vdots & \ddots & \vdots \\
		O(m,1) & O(m,2) & \dots & B_n(m,m)
	\end{bmatrix}
	,
	\feqn
	where $B_n(i,j)$ are $s_i\times s_j$ real matrices for a sequence of positive integers $(s_1,\ldots,s_m)$ with $s_1+\cdots+s_m=d$
	and $O(i,j)$ is a zero $s_i\times s_j$ matrix.
	\begin{lemma}
		\cite[Lemma~3.6]{fifa}
		\label{blockt}
		For given $m\in \nn$ and $(s_1,\ldots,s_m)\in\nn^m,$ let $(A_n)_{n\in\nn}$ be a random, stationary ergodic sequence of matrices, each having the shape introduced in \eqref{block_def}. Suppose in addition that for all $i\in [m]$, $B_1(i,i)$ is invertible with probability one, and for all $1\le i\le j\le m$,
		\beq
		\E\big(\log^+ \|B_1(i,j)\|\big)<\infty \qquad \mbox{and} \qquad \E\big(\log^+ \|B_1^{-1}(i,i)\|\big)<\infty.
		\feq
		For $i \in [m],$ let
		\beq
		\beta_i=\lim_{n\to\infty} \frac{1}{n}\log \big\|B_n(i,i)B_{n-1}(i,i)\cdots B_1(i,i)\big\|,\qquad \as
		\feq
		Then, $\gamma_1=\max_{i \in [m]} \beta_i.$
	\end{lemma}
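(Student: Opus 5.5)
Both inequalities are proved by induction on the number of diagonal blocks $m$, so it suffices to treat the $2\times 2$ block case
\[
A_n=\begin{bmatrix} P_n & Q_n\\ O & R_n\end{bmatrix},
\]
where $P_n$ is $s\times s$ and $R_n$ is $(d-s)\times(d-s)$. Write $\beta_P,\beta_R$ for the top exponents of the diagonal cocycles $(P_n)$ and $(R_n)$. A general $m$-block matrix has this form with $P_n=B_n(1,1)$ and $R_n$ the lower-right $(m-1)$-block corner. That corner is again stationary, ergodic, block triangular, and satisfies the same integrability hypotheses, so induction gives $\beta_R=\max_{i\ge2}\beta_i$. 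The product has the form
\[
X_n=\begin{bmatrix} P^{(n)} & Z_n\\ O & R^{(n)}\end{bmatrix},\qquad Z_n=\sum_{k=1}^{n} P_n\cdots P_{k+1}\,Q_k\,R_{k-1}\cdots R_1,
\]
where $P^{(n)}=P_n\cdots P_1$ and $R^{(n)}=R_n\cdots R_1$ (empty products are the identity). By Oseledets/Kingman the limits $\gamma_1$, $\beta_P$ and $\beta_R$ all exist a.s.

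\emph{Lower bound.} For $v=(x,0)$ we have $X_nv=(P^{(n)}x,0)$, hence $\gamma_1\ge\beta_P$. For the other block, the lower-right corner of $X_n$ is $R^{(n)}$, which is a compression of $X_n$, so $\|X_n\|\ge\|R^{(n)}\|$ and $\gamma_1\ge\beta_R$.

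\emph{Upper bound.} Since $\|X_n\|\le\|P^{(n)}\|+\|R^{(n)}\|+\|Z_n\|$, it suffices to show $\limsup\frac1n\log\|Z_n\|\le\max(\beta_P,\beta_R)$. Fix $\veps>0$. The factor $\|R_{k-1}\cdots R_1\|$ is at most $C_\veps e^{(\beta_R+\veps)k}$ with a random constant $C_\veps$. The troublesome factor is $\|P_n\cdots P_{k+1}\|$. This is a product along a window that ends at time $n$ rather than starting at time $1$, so Kingman applied to the forward cocycle does not control it uniformly in $k$.

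I handle it with the inverses, which is exactly where the hypothesis $\E\log^+\|B_1^{-1}(i,i)\|<\infty$ enters. Write
\[
P_n\cdots P_{k+1}=P^{(n)}\,\bigl(P^{(k)}\bigr)^{-1}.
\]
A cruder bound on $\|(P^{(k)})^{-1}\|$ gives a wrong exponent, so I use the Oseledets splitting for $(P_n)$ instead. Decompose $y=Q_kR_{k-1}\cdots R_1 u$ along the Oseledets filtration of $(P_n)$ at time $k$ (equivalently, work with the two-sided extension). Applied to a vector sitting at time $k$, the forward product $P_n\cdots P_{k+1}$ grows at most like $e^{(\beta_P+\veps)(n-k)}$, up to a tempered constant $K_\veps(\theta^k\omega)$. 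Temperedness of this constant, meaning $\frac1k\log K_\veps(\theta^k\omega)\to0$, follows from the Oseledets theorem in its two-sided/invertible form, and that form requires the integrability of both $\log^+\|P_1\|$ and $\log^+\|P_1^{-1}\|$.

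The coupling term is also tempered: $\|Q_k\|\le e^{\veps k}$ for all large $k$, because $\E\log^+\|Q_1\|<\infty$ together with Borel--Cantelli gives $\frac1k\log^+\|Q_k\|\to0$ a.s. Combining the three factors, each summand of $Z_n$ is at most $C e^{3\veps k}e^{(\beta_P+\veps)(n-k)+(\beta_R+\veps)k}\le Ce^{4\veps n+n\max(\beta_P,\beta_R)}$. Summing $n$ terms costs only a factor $n$, so $\limsup\frac1n\log\|Z_n\|\le\max(\beta_P,\beta_R)+4\veps$. Letting $\veps\to0$ finishes the $2\times2$ case, and induction gives $\gamma_1=\max_i\beta_i$.

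The main obstacle is the uniform-in-$k$ control of the reversed-window products $P_n\cdots P_{k+1}$. If the tempered-constant argument gets delicate, the fallback is to invoke the Oseledets theorem for the invertible extension of the cocycle directly. A further alternative is a Kingman-type estimate on $\max_{k\le n}\|P_n\cdots P_{k+1}\|e^{-(\beta_P+\veps)(n-k)}$, obtained by passing to the time-reversed stationary sequence, where windows ending at $n$ become windows starting at $1$.
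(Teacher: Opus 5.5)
Your proof is correct. Its skeleton matches what the paper does, but the key window estimate is different, and yours is the one that works for genuine blocks.

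The paper cites Furstenberg--Kifer for this lemma. At the start of Section~\ref{shape} it re-proves the upper bound as follows: split $A_n$ into block-diagonal and strictly block-upper parts, expand $X_n$ into monomials with boundedly many coupling factors, control those factors by Borel--Cantelli, and pay a polynomial factor for the number of monomials. The lower bound in the paper's deterministic analogue, Proposition~\ref{thm101}, is the orthogonality argument $\|X_n\|_F\ge\|H_n\|_F$. That is equivalent to your invariant-subspace and compression argument. Your induction on $m$ with the formula for $Z_n$ is the same mechanism taken one coupling at a time; it is essentially the recursion \eqref{est4}.

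The real difference is the uniform-in-$k$ control of the windows $P_n\cdots P_{k+1}$. The paper writes a window as a forward product times the inverse of an initial product. This is sharp there only because $A_{n,1}$ is treated as literally diagonal, so the factors commute and the scalar inverse product has rate exactly $-\beta_p$. For an honest $s_i\times s_i$ block, $\|(P^{(k)})^{-1}\|$ has rate $-\lambda_{\min}$ rather than $-\beta_P$, which is exactly the loss you flagged. The same step appears in \eqref{gr}. Your tempered constant is what makes the argument valid for genuine blocks, and it is the device the paper itself adopts in \eqref{eq:loop_block_bound}.

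You do not need the Oseledets filtration for the top exponent. Set $K(\omega):=\sup_{\ell\ge0}\|P^{(\ell)}(\omega)\|e^{-(\beta_P+\veps)\ell}$. Then $|\log K\circ\theta-\log K|\le|\beta_P+\veps|+\log^+\|P_1\|+\log^+\|P_1^{-1}\|$. So Birkhoff gives $\frac1k(\log K\circ\theta^k-\log K)\to c$ a.s. Moreover $c=0$, because $\log K\circ\theta^k$ has the law of the finite variable $\log K$. This is precisely where $\E\log^+\|B_1^{-1}(i,i)\|<\infty$ is used.

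Your time-reversal fallback actually gives more, but it needs one clarification. Reversal yields only equality in law for each fixed $n$. That gives tightness of $\max_{k\le n}\|P_n\cdots P_{k+1}\|e^{-(\beta_P+\veps)(n-k)}$, not an a.s. bound. Tightness suffices, since $\frac1n\log\|X_n\|\to\gamma_1$ a.s. with $\gamma_1$ deterministic. With that observation, the fallback removes the need for any inverse integrability in the upper bound.
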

	In the remainder of the paper, we occasionally write $\gamma_1(\cala)$ instead of $\gamma_1$ for the Lyapunov exponent of $\cala\in\calm_d^\nn.$  We next obtain the following deterministic version of Lemma~\ref{blockt}.
	\begin{proposition}
		\label{thm101}
		For given $m\in \nn$ and $(s_1,\ldots,s_m)\in\nn^m,$ let $\cala=(A_n)_{n\in\nn}$ be a tempered sequence of matrices (i.\,e., \eqref{small} holds), each having the shape introduced in \eqref{block_def}. For $i\in [m],$ let $\calb_i:=\big(B_n(i,i)\big)_{n\in \nn},$ and assume that all $B_n(i,i)$ are invertible and all $\calb_i$ are Lyapunov regular (i.\,e., an analogue of \eqref{lr} holds for $\calb_i$). Then, $\gamma_1(\cala)= \max_{j\in [m]} \gamma_1(\calb_j).$
	\end{proposition}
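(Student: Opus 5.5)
We prove the two inequalities separately and reduce the general case to $m=2$ by induction on the number of diagonal blocks. Grouping the first $m-1$ block rows and columns into one block makes $A_n$ a $2\times 2$ block upper triangular matrix. The upper-left corner of this grouping is again block triangular, and its diagonal blocks are $\calb_1,\dots,\calb_{m-1}$. So it suffices to treat $m=2$, where
\[
A_n=\begin{bmatrix} B_n & C_n\\ O & D_n\end{bmatrix},\qquad
X_n=\begin{bmatrix} P_n & R_n\\ O & Q_n\end{bmatrix},
\]
with $P_n=B_n\cdots B_1$, $Q_n=D_n\cdots D_1$ and $R_n=\sum_{k=1}^n P_{n,k}\,C_k\,Q_{k-1}$. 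Here $P_{n,k}:=B_n\cdots B_{k+1}$, and $P_{n,n}$ and $Q_0$ are identity matrices.

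One point in the induction needs care. The upper-left corner is not itself Lyapunov regular a priori, so we cannot simply feed it back into the $m=2$ step. Instead we run the $m=2$ argument with the following weaker hypothesis on the first block: for every $\veps>0$ there is $K_\veps$ with
\[
\|P_{n,k}\|\le K_\veps\, e^{(\beta+\veps)(n-k)+\veps k}\quad\text{for all } n\ge k,
\]
where $\beta$ is the relevant top exponent. For a single block this bound follows from Lyapunov regularity; for the composite corner it is propagated through the induction by the same estimate proved below.

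\textbf{Lower bound.} Since $\|X_n\|\ge \|P_n\|$ and $\|X_n\|\ge\|Q_n\|$ (these are restrictions and compressions of $X_n$), we get $\gamma_1(\cala)\ge\max(\gamma_1(\calb_1),\gamma_1(\calb_2))$. For the lower bound Lyapunov regularity is only used to know that these limsups are limits.

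\textbf{Upper bound.} Set $\beta=\max_j\gamma_1(\calb_j)$. The diagonal terms satisfy $\|P_n\|,\|Q_n\|\le e^{(\beta+\veps)n}$ eventually, so only $R_n$ needs work. Temperedness gives $\|C_k\|\le c_\veps e^{\veps k}$. The key step is a uniform two-time estimate
\[
\|P_{n,k}\|\le K_\veps e^{(\gamma_1(\calb_1)+\veps)(n-k)+\veps k},
\]
obtained from Lyapunov regularity of $\calb_1$. Regularity (the analogue of \eqref{lr}, cf. \cite{barl}) provides Oseledets-type splittings along which $P_n$ has exact exponential rates, with subexponentially small angles between the splitting subspaces. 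Then $P_{n,k}=P_nP_k^{-1}$ maps the time-$k$ splitting to the time-$n$ splitting, acting on each component with rate in $[\gamma_i-\veps,\gamma_i+\veps]$ over $[k,n]$, up to factors $e^{\veps k}$ and $e^{\veps n}$. Absorbing the $e^{\veps n}$ factor into $\veps$ gives the displayed bound. This is the standard ``regular implies tempered-uniform'' property of regular sequences, and I would cite it from \cite{barl}, Theorem~3.? on regular sequences, or rederive it from the characterization via the Perron irregularity coefficient being zero.

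Given this estimate, together with $\|Q_{k-1}\|\le K_\veps e^{(\gamma_1(\calb_2)+\veps)k}$, each summand of $R_n$ is bounded by a constant times $e^{(\beta+3\veps)n}$. Summing $n$ of them gives $\limsup_n \frac1n\log\|R_n\|\le\beta+3\veps$. Letting $\veps\to0$ yields $\gamma_1(\cala)\le\beta$.

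The main obstacle is the uniform bound on $\|P_{n,k}\|$ for all pairs $n\ge k$. A bound on the single products $P_n$ does not suffice: without regularity, $\|P_nP_k^{-1}\|$ can exceed $e^{\gamma(n-k)}$ by $e^{c k}$ with $c$ not small. This is exactly the $\alpha^+-\alpha^-$ slack seen in Proposition~\ref{thm100}. The role of the Lyapunov regularity hypothesis is to reduce that slack to $\veps k$.
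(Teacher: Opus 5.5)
Your argument is correct, and its skeleton is the paper's. The lower bound comes from the diagonal blocks of $X_n$: you use compressions, the paper uses $\|X_n\|_F\ge\|H_n\|_F$. The upper bound writes each off-diagonal block of $X_n$ as a sum of at most $n$ terms of the form (diagonal-block product over a time window)$\,\times\,$(tempered coupling)$\,\times\,$(product from time $1$). Everything therefore rests on a two-time estimate $\|B_n(i,i)\cdots B_{k+1}(i,i)\|\le K_\veps e^{(\gamma_1(\calb_i)+\veps)(n-k)+\veps k}$.

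Two things differ. The first is organization. The paper runs a backward induction over block rows via \eqref{est4}: the left factor is always a single diagonal block, and the right factor $C_{n-1-r}(t,j)$ is a product from time $1$ covered by the induction hypothesis. You would get the same economy by grouping the other way: split off $\calb_1$ and put blocks $2,\dots,m$ in the lower-right corner. Then $R_n=\sum_k P_{n,k}C_kQ_{k-1}$ needs a two-time bound only for the single regular block $\calb_1$ and a one-time bound on $Q$. That one-time bound is exactly the proposition for $m-1$ blocks, so nothing has to be propagated (and regularity of $\calb_m$ is never used). With your grouping the propagation does work, but not literally ``by the same estimate proved below''. You need the shifted identity $R_{n,k}=\sum_{l=k+1}^n P_{n,l}C_lQ_{l-1,k}$, which also requires a two-time bound on $Q$; this is available because $Q$ is then a single regular block.

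The second difference is more substantive: how the two-time estimate is derived. Here your route is the one that works in general. In \eqref{gr} the paper bounds $\|B_n(i,i)\cdots B_k(i,i)\|$ by $\|B_n(i,i)\cdots B_1(i,i)\|\,\|B_1^{-1}(i,i)\cdots B_{k-1}^{-1}(i,i)\|$ and asserts that the second factor is at most $G_\veps e^{(-\beta_i+\veps)(k-1)}$. But that norm grows at rate $-\lambda_{s_i}(\calb_i)$, minus the \emph{smallest} exponent (cf. Remark~\ref{reb}(b)). So the step is valid only when $\calb_i$ has a single exponent, e.g. $s_i=1$. For the constant block $\mathrm{diag}(2,1)$ the inverse products have norm $1$, the submultiplicative bound overshoots by $e^{(\beta_i-\lambda_{s_i})k}$, and the induction would not close.

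Your splitting argument avoids this, using uniform two-sided rates on each $E_j$ and angles that decay at most subexponentially. You should replace the placeholder citation with a precise reference or a short derivation. Note that the uniform lower bound $\|P_ku\|\ge c_\veps e^{(\lambda_j-\veps)k}\|u\|$ for $u\in E_j$ is where the determinant/singular-value form of regularity is actually used.
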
	
	\begin{remark}
		\label{reb}
		\item [(a)] For an invertible cocycle $\cala=(A_n)_{n\in\nn}$, Barreira introduces the \emph{Grobman coefficient} (see, e.g., Section~2.2 in \cite{barr}) as a nonnegative quantity measuring the deviation from Lyapunov regularity; in particular, the cocycle is Lyapunov regular if and only if its Grobman coefficient is zero. Under the hypotheses of Proposition~\ref{thm101}, each diagonal block cocycle $\calb_j$ is Lyapunov regular, and hence its Grobman coefficient (in the sense of \cite{barr}) vanishes:
		\beqn
		\label{abs1}
		\mathrm{Grob}(\calb_j)=0,\qquad \forall~j\in [m],
		\feqn
		where $\mathrm{Grob}(\cdot)$ denotes Grobman coefficient.
		
		\item [(b)]
		Under the hypotheses of Lemma~\ref{blockt} (in particular the integrability of $B_1(i,i)^{-1}$), the smallest Lyapunov exponent of the diagonal block cocycle $\calb_i$ can be expressed via the inverse product (see, for instance, Theorem~3.3.10 in \cite{arnold}):
		\beqn
		\label{abs7}
		\lambda_{s_i}(\calb_i)=-\lim_{n\to\infty} \frac{1}{n}\log \big\|B_1^{-1}(i,i)\cdots B_n^{-1}(i,i)\big\|,\qquad \as,
		\feqn
		for all $i\in [m],$ where $\lambda_{s_i}(\calb_i)$ denotes the smallest Lyapunov exponent of $\calb_i$.
	\end{remark}
	
	\begin{proof}[Proof of Proposition~\ref{thm101}]
		Observe that $X_n$ has the same shape as the $A_n$ matrices. Write
		{\small 		\beq
			X_n = \begin{bmatrix}
				C_n(1,1) & C_n(1,2) & \dots & C_n(1,m) \\
				O & C_n(2,2) & \dots & C_n(2,m) \\
				\vdots & \vdots & \ddots & \vdots \\
				O & O & \dots & C_n(m,m)
			\end{bmatrix}
			\quad \, 
			H_n := \begin{bmatrix}
				C_n(1,1) & O & \dots & O \\
				O & C_n(2,2) & \dots & O \\
				\vdots & \vdots & \ddots & \vdots \\
				O & O & \dots & C_n(m,m)
			\end{bmatrix}
			,
			\feq
			where $C_n(i,j)$ are $s_i\times s_j$ matrices. Then,
			\beq
			\| X_n \|_F^2 = \| H_n \|_F^2 + \| X_n-H_n \|_F^2 + 2\inner{H_n}{X_n-H_n}_F,
			\feq
		}
		where $\inner{A}{B}_F=\sum_{i,j} A(i,j)B(i,j)$ is the Frobenius inner product of two matrices $A$ and $B,$ provided they have the same shape. Since for any pair $(i,j)\in [m]^2,$ either $H_n(i,j)$ or $(X_n-H_n)(i,j)$ is zero, $\inner{H_n}{X_n-H_n}_F = 0.$
		This implies that $\| X_n \|_F \geq \| H_n \|_F$. Since $H_n$ is block diagonal,
		\[
		\|H_n\|_F^2=\sum_{j=1}^m \|C_n(j,j)\|_F^2.
		\]
		Using the norm equivalence $\|M\|\le \|M\|_F\le \sqrt{d}\,\|M\|$ (valid for all $d\times d$ matrices), each block has the same exponential growth rate whether measured in $\|\cdot\|$ or $\|\cdot\|_F$. Consequently,
		\[
		\lim_{n\to\infty}\frac1n\log\|H_n\|_F=\max_{j\in[m]}\gamma_1(\calb_j)=:\beta,
		\]
		\noindent For each $i\in[m]$, set $\beta_i:=\gamma_1(\calb_i)$, so that $\beta_i\le \beta$.
		
		and taking $\frac1n\log$ in $\|X_n\|_F\ge \|H_n\|_F$ gives $\beta\le \gamma_1(\cala)$.
		We turn now  to the inverse inequality $\beta \geq \gamma_1(\cala).$ Let $d_0=0$ and $d_j=s_1+\cdots +s_j$ for $j\in[m].$ For $u\in \rr^d,$ let $u_j,$ $j=1,\ldots, m,$ be a vector in $\rr^{s_j}$ with
		\beq
		u_j(i)=u(d_{j-1}+i),\qquad i=1,\ldots, s_j.
		\feq
		Denote by $\|\cdot\|_1$ the $\ell^1$-norm in $\rr^d$ and corresponding matrix norm. Thus, $\|u\|_1=\sum_{i=1}^d |u(i)|$ and $\|A\|_1=\sup_{u\in\rr^d\backslash\{0\}} \frac{\|Au\|_1}{\|u\|_1}=\max_{j\in [d]} \sum_{i=1}^d |A(i,j)|$ for $A \in \calm_d.$ Then, for $u\in \rr^d,$
		\beq
		\|X_n u\|_1&=&\sum_{i=1}^m \Big\|\sum_{j=i}^m C_n (i,j)u_j\Big\|_1\leq \sum_{i=1}^m \sum_{j=i}^m\| C_n (i,j)\|_1\cdot \|u_j\|_1
		\\
		&\leq& \sum_{i=1}^m \sum_{j=i}^m\| C_n (i,j)\|_1\cdot \|u\|_1.
		\feq
		Therefore,
		\beqn
		\label{esta}
		\|X_n \|_1 \leq \sum_{i=1}^m \sum_{j=i}^m\| C_n (i,j)\|_1.
		\feqn
		In view of \eqref{esta}, it suffices to show that for all $i,j\in [m]$ such that $i\leq j,$
		\beqn
		\label{est3}
		\limsup_{n\to\infty} \frac{1}{n}\log \|C_n(i,j)\|_1\leq \beta ,\qquad \as
		\feqn
		To show \eqref{est3}, we adapt an inductive argument employed in the proof of Lemma~3.1.4 in \cite{barr} in order to obtain a version of this estimate for upper-triangular matrices. For $j>i,$ using a convention that $X_0$ is the $d\times d$ identity matrix, we have (cf. (3.10) and (3.11) in \cite{barr}):
		\beqn
		\nonumber
		C_n(i,j)&=&\sum_{t=i+1}^j B_n(i,t)C_{n-1}(t,j)+B_n(i,i)C_{n-1}(i,j)
		\\
		\nonumber
		&=&\sum_{t=i+1}^j B_n(i,t)C_{n-1}(t,j)+B_n(i,i)\sum_{t=i+1}^kB_{n-1}(i,t)C_{n-2}(t,j)
		\\
		\nonumber
		&& \qquad \qquad +B_n(i,i)B_{n-1}(i,i)C_{n-2}(i,j)= \cdots
		\\
		\label{est4}
		&=&
		\sum_{r=0}^{n-1} B_n(i,i)\cdots B_{n-r+1}(i,i)\sum_{t=i+1}^jB_{n-r}(i,t)C_{n-1-r}(t,j).
		\feqn
		Fix some $i^*,j^*\in [m]$ such that $i^*<j^*.$ The inequality in \eqref{est3} is trivially true for $i=j=j^*.$
		Suppose now that \eqref{est3} has been proved for all $(i,j)$ with $j=j^*$ and $i>i^*.$ To finish the proof of the theorem, we will next use a backward induction to show that it is valid for $(i,j)=(i^*,j^*).$ By \eqref{small} and the induction hypothesis, for all $\veps>0$ there exists a constant $D_\veps$ (which depends on $\cala$) such that for all $n\in\nn,$
		\beq
		\|B_n(i,j)\|_1\leq D_\veps e^{n \veps}, \qquad i,j\in [m],
		\feq
		and
		\beq
		\|C_n(i,j^*)\|_1\leq D_\veps e^{n (\beta +\veps)}, \qquad i^*<i\leq j^*.
		\feq
		Furthermore, it follows from \eqref{abs1} that for all $\veps>0$ there exists a constant $G_\veps$ (which depends on $\cala$) such that for any $i\in [m]$ and $k,n\in\nn$ with $k<n,$
		\beqn
		\nonumber
		\big\|B_n(i,i)\cdots B_k(i,i)\big\|_1&=&\big\|B_n(i,i)\cdots B_1(i,i)\cdot B_1^{-1}(i,i)\cdots B_{k-1}^{-1}(i,i)\big\|_1
		\\
		\label{gr}
		&\leq& \big\|B_n(i,i)\cdots B_1(i,i)\big\|_1\cdot \big \|B_1^{-1}(i,i)\cdots B_{k-1}^{-1}(i,i)\big\|_1
		\\
		\nonumber
		&\leq& \big(G_\veps e^{(\beta_i+\veps)n}\big)\cdot\big(G_\veps e^{(-\beta_i+\veps)(k-1)}\big)\leq G_\veps^2 e^{(n-k+1)\beta_i} e^{2n\veps}.
		\feqn
		Therefore, by virtue of \eqref{est4},
		\beq
		\|C_n(i^*,j^*)\|_1&\leq & \sum_{r=0}^{n-1} G_\veps^2 e^{\beta_i r}e^{2n\veps} \sum_{t=i^*+1}^{j^*}D_\veps^2 e^{n \veps} e^{(n-r+1)(\beta+\veps)}\\
		&\leq & nj^* G_\veps^2 D_\veps^2 e^{(4n+1) \veps} e^{(n+1)\beta},
		\feq
		which implies \eqref{est3} with $(i,j)=(i^*,j^*)$ since $\veps>0$ is arbitrary.
	\end{proof}
	
	\section{Application to matrix perturbations} 
	\label{per_sect}
	
	We next apply the results of Section~\ref{block} to perturbations of linear systems. These perturbations represent an important class of matrix transformations with wide-ranging applications in science and engineering.

	Our first result concerns the product of random rank-one perturbations of a specific form. This proposition can also be viewed somewhat as a generalization of Theorem~5.1 in \cite{rankone}, and contributes to the discussion around Remark~2.4 in the same paper.

	\par
	
	\begin{proposition} \label{pert_thm}
		Let $(u_n,\eta_n)_{n \in \nn}$ be a random, stationary, and ergodic sequence, where $u_n \in \rr^d$ and $\eta_n \in \rr$. Suppose $v \in \rr^d$ is a fixed vector such that
		\beqn \label{uv_cond}
		\E\bigl(\log |\eta_1|\bigr) \leq \E \bigl(\log |\eta_1 + v^T u_1|\bigr) < \infty.
		\feqn
		\begin{itemize}
			\item[(a)] Let $\cala := (A_n)_{n \in \nn}$ with $A_n := \eta_n I_d + u_n v^T$ satisfying \eqref{f_cond}. Then
			\beqn \label{An_perturbed_exponents}
			\gamma_1(\cala) = \E \bigl(\log |\eta_1 + v^T u_1| \bigr)
			\quad \text{and} \quad
			\gamma_r(\cala) = \E \bigl(\log |\eta_1|\bigr), \quad r > 1.
			\feqn
			
			\item[(b)] Suppose $A \in \calm_d$ is a non-singular matrix. Let $\cala := (A_n)_{n \in \nn}$ with $A_n := \eta_n A + u_n v^T$ satisfying condition \eqref{f_cond}, and
			\beqn \label{Auv_cond}
			P\bigl(A u_1 v^T = u_1 v^T A\bigr) = 1.
			\feqn
			Then,
			\beq
			- \log \rho(A^{-1}) \leq \gamma_1(\cala) - \E\bigl(\log|\eta_1 + v^T A^{-1} u_1|\bigr) \leq \log \rho(A),
			\feq
			where $\rho(A)$ denotes the spectral radius of the matrix $A$.
		\end{itemize}
	\end{proposition}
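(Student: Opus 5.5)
Part (a) reduces to an upper triangular cocycle by a fixed change of basis. If $v=0$ everything is trivial: $A_n=\eta_n I_d$, and all exponents equal $\E(\log|\eta_1|)$, which is consistent with \eqref{An_perturbed_exponents}. Assume $v\neq 0$ and choose an invertible $Q\in\calm_d$ whose first row is $v^T$ (for instance, complete $v/\|v\|^2$ to a basis whose remaining vectors span $v^\perp$, and take $Q$ as the inverse of the matrix with these columns). Then
\[
QA_nQ^{-1}=\eta_n I_d+(Qu_n)(v^TQ^{-1})=\eta_n I_d+(Qu_n)e_1^T ,
\]
since $v^TQ^{-1}=e_1^T$ by construction. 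This matrix differs from $\eta_n I_d$ only in its first column, which is $\eta_n e_1+Qu_n$. Reordering the coordinates so that the first one becomes last makes it upper triangular, with diagonal entries $\eta_n$ repeated $d-1$ times and $\eta_n+(Qu_n)_1=\eta_n+v^Tu_n$ once. Conjugation by a fixed matrix changes $\frac1n\log\|X_nw\|$ only by $O(1/n)$, so Lyapunov exponents and \eqref{f_cond} are preserved. Applying Proposition~\ref{pkus} gives $\gamma_1=\max\{\E\log|\eta_1|,\E\log|\eta_1+v^Tu_1|\}=\E\log|\eta_1+v^Tu_1|$ by \eqref{uv_cond}. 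The full spectrum is the set of diagonal averages, via the remark after Proposition~\ref{pkus} (Corollary~1 of \cite{hen}), which gives the formula for $\gamma_r$, $r>1$; the multiplicity-$(d-1)$ exponent $\E\log|\eta_1|$ fills the remaining slots.

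One technical point is that Proposition~\ref{pkus} needs invertibility and $\E\log|\cdot|$ finite on the diagonal. I would handle this as follows. Finiteness of the right side of \eqref{uv_cond} together with \eqref{f_cond} controls the positive parts. If $\E\log|\eta_1|=-\infty$, I would argue directly on the triangular form: the lower bound on $\gamma_1$ from the diagonal entry still holds by the Weyl argument of Proposition~\ref{thm100}. For the upper bound, Lemma~\ref{blockt} with blocks of sizes $d-1$ and $1$ could be used instead, provided its integrability conditions hold. I expect this measure-theoretic bookkeeping to be the only delicate part of (a).

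For (b), I would write $A_n=A\,(\eta_n I_d+A^{-1}u_nv^T)=:A\,\tilde A_n$. The commutation hypothesis \eqref{Auv_cond} gives $A(u_nv^T)=(u_nv^T)A$, hence $A^{-1}u_nv^T=u_nv^TA^{-1}$, so $A$ commutes with every $\tilde A_n$ almost surely. Therefore
\[
X_n=A^n\,\tilde A_n\cdots\tilde A_1 .
\]
Submultiplicativity then yields $\frac1n\log\|X_n\|\le \frac1n\log\|A^n\|+\frac1n\log\|\tilde X_n\|$, and $\frac1n\log\|A^n\|\to\log\rho(A)$ by Gelfand's formula. In the other direction, $\tilde X_n=A^{-n}X_n$ gives $\frac1n\log\|\tilde X_n\|\le\log\rho(A^{-1})+\gamma_1(\cala)+o(1)$. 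So it remains to show $\gamma_1(\tilde\cala)=\E\log|\eta_1+v^TA^{-1}u_1|$, which is part (a) applied with $u_n$ replaced by $A^{-1}u_n$.

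The main obstacle is verifying the hypotheses of part (a) for $\tilde\cala$. Condition \eqref{f_cond} holds because $\|\tilde A_1\|\le\|A^{-1}\|\|A_1\|$. However, \eqref{uv_cond} must now be checked with $A^{-1}u_1$ in place of $u_1$, and this is not literally the stated hypothesis. I would first try to use the commutation relation to identify $v^TA^{-1}u_1$ with a multiple of $v^Tu_1$. If $u_1v^T\ne0$ commutes with $A$, then $u_1$ is a right eigenvector and $v$ a left eigenvector of $A$ for a common eigenvalue $\mu$, so $v^TA^{-1}u_1=\mu^{-1}v^Tu_1$. Failing that, I would bypass \eqref{uv_cond} altogether. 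The triangularization in (a) gives $\gamma_1(\tilde\cala)=\max\{\E\log|\eta_1|,\E\log|\eta_1+v^TA^{-1}u_1|\}$ without any ordering assumption, and I would then argue that the maximum is attained at the second term, or that the stated bounds absorb the discrepancy.
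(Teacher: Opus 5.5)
Your part (a) is essentially the paper's argument. The paper conjugates so that $v^Tx$ becomes the last coordinate, which makes $P^{-1}A_nP=\eta_nI_d+\tilde u_ne_d^T$ upper triangular without reordering. It then reads the spectrum off the diagonal via Proposition~\ref{thm100} and the multiplicative ergodic theorem. Your use of Proposition~\ref{pkus} and Hennion's corollary is equivalent, and your caveat about $\E\log|\eta_1|=-\infty$ is one the paper does not address.

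In (b), your factorization $X_n=A^n\tilde X_n$ and the two Gelfand-formula inequalities are exactly the paper's. The paper then closes by citing part (a) for $\tilde{\cala}$, i.e.\ it asserts $\gamma_1(\tilde{\cala})=\E\log|\eta_1+v^TA^{-1}u_1|$ without checking \eqref{uv_cond} for $A^{-1}u_n$. That is precisely the step you flagged. Your suspicion is correct, but neither of your proposed ways out can work, because the upper bound in (b) is false as stated.

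Your eigenvector identity shows why. It is correct, and $\mu$ is deterministic, since a fixed $v\neq0$ is a left eigenvector for only one eigenvalue. The ordering you need is $\E\log|\eta_1|\le\E\log|\eta_1+\mu^{-1}v^Tu_1|$, which \eqref{uv_cond} does not imply when $\mu\neq1$.

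Concretely, take $d=2$, $A=2I_2$, $\eta_n\equiv1$, $v=e_1$, $u_n\equiv-\frac{5}{2}e_1$.
\begin{itemize}
\item Then \eqref{uv_cond} reads $0\le\log\frac{3}{2}<\infty$, and \eqref{Auv_cond} holds trivially.
\item $A_n=\mathrm{diag}(-\frac{1}{2},2)$, so $\gamma_1(\cala)=\log 2$.
\item But $\eta_1+v^TA^{-1}u_1=-\frac{1}{4}$, so $\gamma_1(\cala)-\E\log|\eta_1+v^TA^{-1}u_1|=\log 8>\log 2=\log\rho(A)$.
\item Here $\tilde A_n=\mathrm{diag}(-\frac{1}{4},1)$ and $\gamma_1(\tilde{\cala})=\E\log|\eta_1|=0$.
\end{itemize}
So the maximum sits at the other term, and the discrepancy $\log 4$ is not absorbed by the stated bounds.

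Your argument does establish the following, with no ordering assumption.
\begin{itemize}
\item The lower bound in (b) as stated. The triangular form gives $\gamma_1(\tilde{\cala})=\max\{\E\log|\eta_1|,\E\log|\eta_1+v^TA^{-1}u_1|\}\ge\E\log|\eta_1+v^TA^{-1}u_1|$.
\item The corrected upper bound
\[
\gamma_1(\cala)\le\log\rho(A)+\max\bigl\{\E\log|\eta_1|,\ \E\log|\eta_1+v^TA^{-1}u_1|\bigr\},
\]
which is the form the paper itself adopts later in Proposition~\ref{pert_thm2} (case $m=1$).
\end{itemize}
The stated two-sided bound is recovered only under the extra hypothesis $\E\log|\eta_1|\le\E\log|\eta_1+v^TA^{-1}u_1|$. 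That hypothesis coincides with \eqref{uv_cond} when, for instance, $v^TA=v^T$.
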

	
	We note that, under mild conditions on the sequence $(\eta_n, u_n)_{n \in \mathbb{N}}$, the inequality~\eqref{uv_cond} can be relaxed, since the result may be applied directly to the inverse cocycle $\cala^{-1}:=(A_n^{-1})_{n\in\nn}$, with part~(a) adjusted accordingly.
	
	\begin{remark}[Varying right vector: two extendable regimes]
		Consider $A_n=\eta_n I + u_n v_n^{\top}$.
		
		(i) If $v_n=c_n v$ for a fixed $v\neq 0$, then $A_n=\eta_n I + \tilde u_n v^{\top}$ with
		$\tilde u_n:=c_n u_n$. Hence Proposition~\ref{pert_thm} applies verbatim (under the
		corresponding integrability for $\tilde u_n$), yielding $d-1$ Lyapunov exponents equal to
		$\E\log|\eta_1|$ and the remaining exponent $\E\log|\eta_1+c_1 v^{\top}u_1|$.
		
		(ii) More generally, if $v_n\in V$ a.s. for a fixed deterministic subspace $V\subset \rr^d$
		of dimension $r$, then $H:=V^{\perp}$ is a common invariant subspace and
		$A_n|_H=\eta_n\,\mathrm{Id}$. Consequently $\E\log|\eta_1|$ is a Lyapunov exponent of
		multiplicity at least $d-r$. The remaining $r$ exponents are those of the induced cocycle
		on the quotient $\rr^d/H$; in general they are not explicit if $v_n$ rotates in $V$.
		Nevertheless, the determinant identity implies
		\[
		\sum_{i=1}^d \gamma_i=(d-1)\E\log|\eta_1|+\E\log|\eta_1+v_1^{\top}u_1|.
		\]
	\end{remark}

	In order to prove Proposition \ref{pert_thm}, we first present a preliminary result that explains how to extract the first few Lyapunov exponents of block upper triangular matrices by Lemma~\ref{blockt}. This extends Corollary~1 in \cite{hen} (cf.\ the paragraph following Proposition~\ref{pkus} in these notes).

	\begin{proposition} \label{block_all_exp}
		Fix $d, m \in \nn$, and let $\caln := (N_n)_{n \in \nn}$ be a random, stationary, and ergodic sequence of matrices
		\beq
		N_n := 
		\begin{bmatrix}
			A_n & * \\
			O & \eta_n I_m
		\end{bmatrix},
		\feq
		where $\eta_n \in \rr$ and $A_n \in \calm_d$. Assume that $\caln$ satisfies \eqref{f_cond} (i.e.\ $\E(\log^+\|N_1\|)<\infty$), that $A_1$ is almost surely invertible with $\E(\log^+\|A_1^{-1}\|)<\infty$, and that $\eta_1\neq 0$ almost surely with $\E(\log^+|\eta_1|^{-1})<\infty$. Set $\cala := (A_n)_{n \in \nn}$. Then
		\beq
		\gamma_1(\caln) = \max \bigl( \E(\log | \eta_1 |), \, \gamma_1(\cala) \bigr),
		\feq
		and for any $2 \leq r \leq \min(m,d)$,
		\beq
		&& \gamma_r(\caln) = \max_{0\leq \ell \leq r} \left(\sum_{s=1}^\ell \gamma_s(\cala) + (r-\ell) \E(\log | \eta_1|)  \right) \\
		&& \qquad \qquad - \max_{0\leq \ell \leq r-1} \left(\sum_{s=1}^\ell \gamma_s(\cala) + (r-\ell) \E(\log | \eta_1|)  \right).
		\feq
	\end{proposition}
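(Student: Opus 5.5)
The first formula is a direct instance of Lemma~\ref{blockt} with $m=2$, diagonal blocks $B_n(1,1)=A_n$ and $B_n(2,2)=\eta_n I_m$. The integrability hypotheses of that lemma are exactly what is assumed. Indeed, $\log^+\|B_1(i,j)\|\le \log^+\|N_1\|$ up to a norm-equivalence constant for every block, and the inverse integrability is assumed for both $A_1$ and $\eta_1$. The diagonal exponents are $\beta_1=\gamma_1(\cala)$ and
\[
\beta_2=\lim \tfrac1n\sum_{k\le n}\log|\eta_k|=\E\log|\eta_1|
\]
by Birkhoff's theorem. This gives $\gamma_1(\caln)=\max(\E\log|\eta_1|,\gamma_1(\cala))$.

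For the higher exponents I would pass to exterior powers. Since the Oseledets exponents of $\caln$ satisfy $\gamma_1(\wedge^r\caln)=\gamma_1(\caln)+\cdots+\gamma_r(\caln)$ (for the exponents counted with multiplicity), it suffices to prove
\[
\gamma_1(\wedge^r\caln)=\max_{0\le \ell\le r}\Big(\sum_{s=1}^\ell\gamma_s(\cala)+(r-\ell)\E\log|\eta_1|\Big)=:M_r .
\]
The stated formula is then $M_r-M_{r-1}$, after noting that $M_{r-1}$ as written in the statement (with $r-\ell$ in place of $r-1-\ell$) differs only by the shift $\E\log|\eta_1|$. I would reconcile this bookkeeping when writing the proof, possibly by correcting the indices.

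To compute $\gamma_1(\wedge^r\caln)$, split $\rr^{d+m}=\rr^d\oplus\rr^m$ with $e_1,\dots,e_d$ spanning the first summand, which is invariant. Then
\[
\wedge^r(\rr^d\oplus\rr^m)=\bigoplus_{\ell}\wedge^\ell\rr^d\otimes\wedge^{r-\ell}\rr^m ,
\]
and the subspaces $F_k:=\bigoplus_{\ell\ge k}\wedge^\ell\rr^d\otimes\wedge^{r-\ell}\rr^m$ form an invariant flag under $\wedge^r N_n$. Ordering the summands by decreasing $\ell$ makes $\wedge^rN_n$ block upper triangular, with diagonal blocks $\wedge^\ell A_n\otimes \eta_n^{r-\ell}I$. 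The top exponent of the $\ell$-th diagonal block is
\[
\gamma_1(\wedge^\ell\cala)+(r-\ell)\E\log|\eta_1|=\sum_{s\le\ell}\gamma_s(\cala)+(r-\ell)\E\log|\eta_1| ,
\]
where $\ell$ ranges over $\max(0,r-m)\le\ell\le\min(r,d)$. The hypothesis $r\le\min(m,d)$ ensures that all $0\le\ell\le r$ occur. Applying Lemma~\ref{blockt} to $\wedge^r\caln$ then yields $M_r$.

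The step needing care is verifying the hypotheses of Lemma~\ref{blockt} for the exterior-power cocycle. For integrability, $\log^+\|\wedge^rN_1\|\le r\log^+\|N_1\|$. For the inverse blocks,
\[
\|(\wedge^\ell A_1\otimes\eta_1^{r-\ell}I)^{-1}\|\le\|A_1^{-1}\|^\ell|\eta_1|^{-(r-\ell)},
\]
so the assumed integrability of $\log^+\|A_1^{-1}\|$ and $\log^+|\eta_1|^{-1}$ suffices. Stationarity and ergodicity are preserved because the new cocycle is a measurable function of $N_n$. The only other subtle point is the identification $\gamma_1(\wedge^\ell\cala)=\sum_{s\le\ell}\gamma_s(\cala)$ with multiplicities. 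This is standard from Oseledets via singular values, as in the discussion around \eqref{lr}.
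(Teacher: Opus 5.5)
Your proposal is correct and follows the paper's proof. Both pass to the $r$-th compound (exterior-power) cocycle and observe that it is block triangular with diagonal blocks $\eta_n^{r-\ell}\wedge^\ell A_n$ (the paper's $H_{r-\ell}(\eta_n I_m)\otimes H_\ell(A_n)$). Both then apply Lemma~\ref{blockt} to get $\gamma_1(\calb_r)=M_r$ and take differences via \eqref{comp_l_form}. Your decreasing-$\ell$ ordering is in fact the one that makes the matrix upper rather than lower block triangular, and your check of the integrability hypotheses supplies what the paper leaves implicit. Your bookkeeping worry is also justified: as printed, the second maximum equals $M_{r-1}+\E(\log|\eta_1|)$, so it must carry $(r-1-\ell)$ for the formula to hold (e.g.\ for constant $A_n=e^{2}I_2$, $\eta_n=e$, $d=m=r=2$, the printed version gives $1$ instead of $\gamma_2(\caln)=2$), and $M_r-M_{r-1}$ is exactly what the paper's argument produces.
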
	
	\begin{proof}
		Recall that a minor of size~$r$ is associated to ordered sequences of indices $J$ and $L$ of length $r$, defined as the determinant of the submatrix formed by selecting the rows indexed by $J$ and the columns indexed by $L$.
		We write $[N]:=\{1,\dots,N\}$ and, for $0\le r\le N$, we let
		$[N]_r:=\{(i_1,\dots,i_r):1\le i_1<\cdots<i_r\le N\}$; we freely identify such increasing $r$--tuples with the underlying $r$--element subsets.
		For $B\subset[m]$ we write $B+d:=\{b+d: b\in B\}\subset\{d+1,\dots,d+m\}$, and for $J_2,L_2\subset\{d+1,\dots,d+m\}$ we interpret
		$I_m[J_2,L_2]:=I_m[J_2-d,\,L_2-d]$.
		For any matrix $A$, let $H_r(A)$ denote the $r$-th compound matrix of $A$, whose entries are all minors of size~$r$. Set $B_n^{(r)} := H_r(N_n)$, and let $\calb_r := (B_n^{(r)})_{n \in \nn}$ be the sequence of $r$-th compound matrices in $\caln$.
		
		Recall that by the discussion in p.~130 of \cite[Section~3.2]{arnold}, we write
		\beqn \label{comp_l_form}
		\gamma_1(\calb_r) = \sum_{i=1}^r \gamma_i(\caln)
		\quad \text{or equivalently} \quad
		\gamma_r(\caln) = 
		\begin{cases}
			\gamma_1(\calb_r) - \gamma_1(\calb_{r-1}) & r \geq 2, \\[4pt]
			\gamma_1(\calb_1) & r=1.
		\end{cases}
		\feqn
		
		Observe that $B_n^{(r)}$ is also a block upper triangular matrix, and any “diagonal” minor of $N_n$ can be written as the product of a minor of $\eta_n I_m$ and a minor of $A_n$. To formalize this, we define $H_0(A)=I_1$. For any two subsets $J,L \subset [m]$ of the same size, the $(J,L)$-th minor of $\eta_n I_m$ is zero if $J \neq L$, and equals $\eta_n^{|J|}$ otherwise. Let $J,L \in [m+d]_r$ be such that $J$ and $L$ can be written as $J_1 \cup J_2$ and $L_1 \cup L_2$, respectively, with $J_1, L_1 \in [d]_\alpha$ and $J_2, L_2 \in \{ B + d \mid B \in [m]_\beta \}$ for some $\alpha + \beta = r$. Then,
		\beq
		\det N_n[J,L] = \eta_n^{\beta} \det A_n[J_1, L_1] \det I_m[J_2-d,L_2-d].
		\feq
		
		Then, the observation is that for $2 \leq r \leq \min(m,d)$,
		\beq
		B_n^{(r)} = 
		\begin{bmatrix}
			H_r(\eta_n I_m) \otimes H_0(A_n) & * & * & \cdots & * \\
			O & H_{r-1}(\eta_n I_m) \otimes H_1(A_n) & * & \cdots & * \\
			O & O & \cdots & \cdots & * \\
			\vdots & \vdots & \ddots & \vdots & \vdots \\
			O & O & \cdots & O & H_0(\eta_n I_m) \otimes H_{r}(A_n)
		\end{bmatrix},
		\feq
		where $\otimes$ denotes the tensor product.
		Therefore, Lemma~\ref{blockt} yields
		\beq
		\gamma_1(\calb_r) 
		= \max_{0 \leq \ell \leq r} \gamma_1\bigl( \bigl(\eta_n^{r-\ell} H_\ell(A_n)\bigr)_{n \in \nn} \bigr)
		= \max_{0 \leq \ell \leq r} \Bigl( \sum_{s=1}^\ell \gamma_s(\cala) + (r-\ell) \E(\log | \eta_1 |) \Bigr).
		\feq
		Hence, by \eqref{comp_l_form}, the proof follows.
	\end{proof}

	Now, we provide the 
	
	\begin{proof}[Proof of Proposition~\ref{pert_thm}]
		We begin with the proof of (a). If $v=0$, then $A_n=\eta_n I_d$ and the conclusion is immediate. Assume $v\neq 0$.
		Choose $w\in\rr^d$ with $v^T w=1$ and extend a basis of $H:=\ker(v^T)$ to a basis of $\rr^d$ by adding $w$ as the last vector.
		Let $P$ be the corresponding change-of-basis matrix, so that in the new coordinates $v^T x$ is the last coordinate of $x$.
		A direct computation gives
		\[
		\tilde A_n:=P^{-1}A_nP=\eta_n I_d+\tilde u_n e_d^T,
		\qquad\text{where }\tilde u_n:=P^{-1}u_n\text{ and }e_d=(0,\dots,0,1)^T.
		\]
		In particular, each $\tilde A_n$ is upper triangular, with diagonal entries $\eta_n$ (with multiplicity $d-1$) and $\eta_n+e_d^T\tilde u_n=\eta_n+v^T u_n$.
		Since Lyapunov exponents are invariant under conjugation, the Lyapunov spectrum of $(A_n)$ equals that of $(\tilde A_n)$.
		Applying Proposition~\ref{thm100} (together with the multiplicative ergodic theorem under \eqref{f_cond}) to this triangular cocycle yields
		\[
		\gamma_1(\cala)=\E\bigl(\log|\eta_1+v^T u_1|\bigr)
		\quad\text{and}\quad
		\gamma_r(\cala)=\E\bigl(\log|\eta_1|\bigr),\ \ r>1,
		\]
		which is \eqref{An_perturbed_exponents}.
		
		\medskip
		Now we prove part (b). Define $\tilde \cala:=(\tilde A_n)_{n\in \nn}$ with $\tilde A_n := \eta_n I_d + A^{-1} u_n v^T$. Note that \(A_n = A \tilde A_n\). Repeatedly applying~\eqref{Auv_cond} gives
		\beqn \label{Xn_expansion0}
		X_n 
		= A(\eta_n I_d+ A^{-1} u_n v^T) \cdots A(\eta_1 I_d+ A^{-1} u_1 v^T) 
		= A^n Y_n,
		\feqn
		where $Y_n := \prod_{i=1}^n \tilde A_i$. Since the norm of a product is at most the product of norms, we get an upper bound:
		\beqn
		\gamma_1(\cala) 
		&\leq& \lim \frac{1}{n} \log \|A^n\| + \gamma_1(\tilde \cala) \notag \\
		&=& \log \rho(A) + \E\bigl(\log|\eta_1 + v^T A^{-1} u_1|\bigr), \label{upp_bnd1}
		\feqn
		where the last equality uses part~(a) applied to $\tilde\cala$ and Gelfand's formula for the spectral radius.
		
		Next, from \eqref{Xn_expansion0},
		\beq
		\|Y_n\| \leq \|X_n\| \, \|A^{-n}\|.
		\feq
		Taking logarithms in the previous inequality, dividing by $n$, and letting $n\to\infty$ (using part~(a) for the cocycle $(\tilde A_n)$ and Gelfand's formula for $\rho(A^{-1})$) yields the lower bound:
		\beqn
		\gamma_1(\cala) + \log \rho(A^{-1}) \geq \E\bigl(\log|\eta_1 + v^T A^{-1} u_1|\bigr). \label{lw_bnd1}
		\feqn
		Combining \eqref{lw_bnd1} and \eqref{upp_bnd1} concludes the proof of part~(b).
	\end{proof}
	
	Proposition~\ref{pert_thm} illustrates a general mechanism: a low-rank perturbation often preserves a large invariant subspace, and the cocycle induced on the corresponding quotient captures the remaining Lyapunov exponent(s).
	In our setting the hyperplane $H=\ker(v^T)$ is invariant and $A_n|_H=\eta_n I$, while the induced one-dimensional cocycle on $\rr^d/H$ is multiplication by $\eta_n+v^T u_n$ (as witnessed by the identity $v^T A_n=(\eta_n+v^T u_n)v^T$).
	Equivalently, after a fixed change of basis that sends $v^T x$ to the last coordinate, each $A_n$ becomes block upper triangular with diagonal blocks $\eta_n I_{d-1}$ and $\eta_n+v^T u_n$.
	
	This viewpoint is useful beyond rank one: when a cocycle preserves a filtration of subspaces, the Lyapunov spectrum is controlled by the induced cocycles on the successive quotients (cf.\ Lemma~\ref{blockt} and Proposition~\ref{block_all_exp}).

	\par 
	Finally, we extend this result to the following simple lemma by suitably adapting a standard $2 \times 2$ technique to our context (cf.\ for instance, \cite{lima, mann}).
	
	\begin{lemma} 
		\label{pthm1}
		For a given $m \in \nn$ and $V \in \calm_{d \times m}$, let $(U_n,\eta_n)_{n\in \nn}$ be a random, stationary, and ergodic sequence, where $U_n \in \calm_{d \times m}$ and $\eta_n \in \rr$. Define $\cala := (A_n)_{n\in \nn}$ and $\tilde \cala := (\tilde A_n)_{n\in \nn}$, with
		\[
		A_n := \eta_n I_d + U_n V^T
		\quad \text{and} \quad
		\tilde A_n := \eta_n I_m + V^T U_n.
		\]
		Then
		\beqn \label{sum_a_atilde}
		\sum_{i=1}^d \gamma_i(\cala) - \sum_{i=1}^m \gamma_i(\tilde \cala) = (d - m) \E(\log |\eta_1|).
		\feqn
		Moreover, if $\gamma_1(\cala) \geq \E(\log|\eta_1|)$, then $\gamma_1(\cala) = \gamma_1(\tilde \cala)$. Similarly, if for some $2 \leq r \leq \min(m,d)$ we have $\gamma_r(\cala) \geq \E(\log|\eta_1|)$, then $\gamma_r(\cala) = \gamma_r(\tilde \cala)$.
	\end{lemma}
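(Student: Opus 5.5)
The idea is to embed both cocycles into a single $(d+m)\times(d+m)$ block cocycle that is conjugate to two different block-triangular forms. This is the standard $2\times 2$ trick. Define
\[
N_n:=\begin{bmatrix} \eta_n I_d & U_n \\ O & O_m\end{bmatrix}\quad\text{and}\quad S:=\begin{bmatrix} I_d & O\\ V^T & I_m\end{bmatrix},
\]
or more conveniently the invertible matrices
\[
M_n:=\begin{bmatrix} \eta_n I_d + U_nV^T & U_n\\ O & \eta_n I_m\end{bmatrix}.
\]
Conjugating by $S$, a direct block computation gives
\[
S M_n S^{-1}=\begin{bmatrix} \eta_n I_d & U_n\\ O & \eta_n I_m+V^TU_n\end{bmatrix}=:\tilde M_n .
\]
The entry check is: $S M_n=\begin{bmatrix} A_n & U_n\\ V^TA_n & V^TU_n+\eta_n I_m\end{bmatrix}$, and multiplying on the right by $S^{-1}=\begin{bmatrix} I & O\\ -V^T & I\end{bmatrix}$ gives the $(2,1)$ block $V^TA_n-V^TU_nV^T-\eta_nV^T=O$. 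Since $S$ is a fixed invertible matrix, $(M_n)$ and $(\tilde M_n)$ have the same Lyapunov spectrum. I will assume the integrability conditions needed to apply the multiplicative ergodic theorem, Lemma~\ref{blockt} and Proposition~\ref{block_all_exp}: \eqref{f_cond} for $U_n$, integrability of $\log|\eta_1|$, and invertibility of $\eta_n$ and $A_n$, $\tilde A_n$ where needed. If these are not implicit in the statement, I would add them.

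For \eqref{sum_a_atilde} I would use determinants. The sum of all Lyapunov exponents equals $\E\log|\det(\cdot)|$ of the one-step matrix, by Oseledets applied to the top exterior power. We have $\det M_n=\eta_n^m\det A_n$ and $\det\tilde M_n=\eta_n^d\det\tilde A_n$, and these are equal. This is just Sylvester's identity $\det(\eta I_d+UV^T)=\eta^{d-m}\det(\eta I_m+V^TU)$. Taking expectations gives $\sum_i\gamma_i(\cala)+m\E\log|\eta_1|=\sum_i\gamma_i(\tilde\cala)+d\E\log|\eta_1|$, which is \eqref{sum_a_atilde}.

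For the top exponent I would apply Proposition~\ref{block_all_exp}, or Lemma~\ref{blockt}, to both triangular forms. From $M_n$ we get $\gamma_1(M)=\max(\gamma_1(\cala),\E\log|\eta_1|)$. From $\tilde M_n$, after swapping the block order so that the scalar block sits in the lower position (or applying Lemma~\ref{blockt} directly, which does not care about order), we get $\gamma_1(\tilde M)=\max(\gamma_1(\tilde\cala),\E\log|\eta_1|)$. If $\gamma_1(\cala)\ge\E\log|\eta_1|$, then $\gamma_1(\tilde\cala)=\gamma_1(\cala)$ unless $\gamma_1(\tilde\cala)<\E\log|\eta_1|\le\gamma_1(\cala)$. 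That would force $\gamma_1(M)=\gamma_1(\cala)$ while $\gamma_1(\tilde M)=\E\log|\eta_1|$. Equality of the two spectra then gives $\gamma_1(\cala)=\E\log|\eta_1|$, which is again consistent with the claim only if $\gamma_1(\tilde\cala)=\E\log|\eta_1|$.

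This last point is the main obstacle. The max-formula alone cannot rule out $\gamma_1(\tilde\cala)<\E\log|\eta_1|=\gamma_1(\cala)$. To close it, I would compare full spectra rather than just top exponents. By Proposition~\ref{block_all_exp}, or by the multiset statement following Proposition~\ref{pkus} extended to blocks via compound matrices, the spectrum of $M$ is the multiset union of the spectrum of $\cala$ and $m$ copies of $\E\log|\eta_1|$. Likewise, the spectrum of $\tilde M$ is the union of the spectrum of $\tilde\cala$ and $d$ copies of $\E\log|\eta_1|$. Equating the multisets and cancelling copies of $c:=\E\log|\eta_1|$ shows the following. The exponents of $\cala$ that are $\ne c$ coincide, with multiplicity, with those of $\tilde\cala$ that are $\ne c$. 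Moreover, the counts of $c$ differ by exactly $d-m$. Ordering decreasingly, if $\gamma_r(\cala)\ge c$, then the $r$ largest entries of $\cala$ consist of exponents $>c$, shared with $\tilde\cala$, followed by copies of $c$. I would then check that $\tilde\cala$ has at least as many copies of $c$ in its top $r$ positions, since $\min(m,d)\ge r$ and the copy count shifts by $d-m$. This gives $\gamma_r(\tilde\cala)=\gamma_r(\cala)$. The bookkeeping when $m>d$ versus $m<d$ is the delicate step, and there I would rely on the explicit formula of Proposition~\ref{block_all_exp} for $r\le\min(m,d)$.
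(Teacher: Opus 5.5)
\textbf{There is a genuine gap in the last step, and it cannot be closed, because the boundary case of the statement is false when $d>m$.}

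Your determinant argument for \eqref{sum_a_atilde} is fine. It is essentially the paper's argument, and Sylvester's identity makes the conjugation unnecessary for that part.

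You are right that the identity $\max(\gamma_1(\cala),c)=\max(\gamma_1(\tilde\cala),c)$, with $c:=\E\log|\eta_1|$, does not settle the case $\gamma_1(\cala)=c$. That identity is the only input the paper's proof uses for the ``moreover'' part; it simply asserts that it implies equality.

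Your multiset comparison is the right sharpening. Run the compound-matrix argument with Lemma~\ref{blockt} for every $r\le d+m$, restricting $\ell$ to $\max(0,r-m)\le\ell\le\min(r,d)$. This shows that the spectrum of a block-triangular cocycle is the union of the diagonal-block spectra. Hence $\mathrm{spec}(\cala)\cup\{c\}^m=\{c\}^d\cup\mathrm{spec}(\tilde\cala)$.

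The bookkeeping then goes the wrong way. Let $p$ be the common number of exponents $>c$, and let $a,b$ be the multiplicities of $c$ in $\mathrm{spec}(\cala)$ and $\mathrm{spec}(\tilde\cala)$. The multiset identity forces $b=a-(d-m)$. When $d>m$, $\tilde\cala$ therefore has \emph{fewer} copies of $c$. So $\gamma_r(\cala)=c$ with $p+b<r\le p+a$ gives $\gamma_r(\tilde\cala)<c$. Your claim that $\tilde\cala$ has ``at least as many copies of $c$'' fails exactly in this regime.

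Falling back on Proposition~\ref{block_all_exp} does not help. For $r\le\min(m,d)$ it only yields $\max(\gamma_i(\cala),c)=\max(\gamma_i(\tilde\cala),c)$, which is the information you already found insufficient.

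Here is an explicit counterexample with $d=2$, $m=1$. Take $\eta_n\equiv 1$, $V=e_2$, and $U_n\equiv(0,-\tfrac12)^T$; a constant sequence is stationary and ergodic. Then $A_n=\mathrm{diag}(1,\tfrac12)$, so $\gamma_1(\cala)=0=c$. But $\tilde A_n=\tfrac12$, so $\gamma_1(\tilde\cala)=-\log 2\neq\gamma_1(\cala)$.

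What your argument does prove is the following:
\begin{enumerate}
\item If $\gamma_r(\cala)>c$, then $r\le p$ and so $\gamma_r(\tilde\cala)=\gamma_r(\cala)$.
\item If $m\ge d$, then $b\ge a$, and the conclusion also holds under $\gamma_r(\cala)\ge c$.
\item Symmetrically, if $d\ge m$ and $\gamma_r(\tilde\cala)\ge c$, then $\gamma_r(\cala)=\gamma_r(\tilde\cala)$.
\end{enumerate}
You should state and prove one of these corrected versions rather than try to close the gap as posed.
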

	\begin{proof}
		Consider the following square block matrices:
		\[
		R :=
		\begin{bmatrix}
			I_d & O \\
			V^T & I_m
		\end{bmatrix},
		\quad
		R^{-1} =
		\begin{bmatrix}
			I_d & O \\
			- V^T & I_m
		\end{bmatrix},
		\qquad
		\text{and} \qquad
		N_n =
		\begin{bmatrix}
			A_n & U_n \\
			O & \eta_n I_m
		\end{bmatrix}.
		\]
		The proof of the lemma relies on the following identity:
		\beqn \label{cooliden_1}
		\tilde N_n := R N_n R^{-1} =
		\begin{bmatrix}
			\eta_n I_d & V \\
			O & \tilde A_n
		\end{bmatrix}.
		\feqn
		Recalling~\eqref{xn} and using~\eqref{cooliden_1}, we may write
		{\small
			\[
			\prod_{i=1}^n N_i =
			\begin{bmatrix}
				X_n & * \\
				O & \prod_{i=1}^n \eta_i I_m
			\end{bmatrix}
			= R^{-1}
			\begin{bmatrix}
				\prod_{i=1}^n \eta_i I_d & * \\
				0 & \tilde X_n
			\end{bmatrix}
			R 
			= R^{-1} \bigl(\prod_{i=1}^n \tilde N_i\bigr) R,
			\]
		}where \( * \) denotes unspecified blocks. Let $\caln:=(N_n)_{n\in \nn}$ and $\tilde \caln:=(\tilde N_n)_{n\in \nn}$. Observe that by the Oseledets theorem, $\gamma_i(\caln)$ and $\gamma_i(\tilde \caln)$s exist and
		\[
		\begin{aligned}
			\sum_{i=1}^d \gamma_i(\cala) + m \E(\log |\eta_1|) 
			&= \E\bigl(\log|\det(A_1)|\bigr) + m \E(\log |\eta_1|) \\
			&= \E\bigl(\log|\det(N_1)|\bigr) 
			= \sum_{i=1}^{d+m} \gamma_i(\caln) 
			= \sum_{i=1}^{d+m} \gamma_i(\tilde \caln) \\
			&= \E\bigl(\log|\det(\tilde N_1)|\bigr) 
			= \E\bigl(\log|\det(\tilde A_1)|\bigr) + d \E(\log |\eta_1|) \\
			&= \sum_{i=1}^m \gamma_i(\tilde \cala) + d \E(\log |\eta_1|).
		\end{aligned}
		\]
		Rearranging yields~\eqref{sum_a_atilde}.
		
		Next, $\gamma_1(\caln)$ and $\gamma_1(\tilde \caln)$ both exist and are equal. Hence, by Lemma~\ref{blockt},
		\beqn \label{max_gamma1}
		\max\bigl(\gamma_1(\cala), \E(\log|\eta_1|)\bigr) = \max\bigl(\gamma_1(\tilde \cala), \E(\log|\eta_1|)\bigr).
		\feqn
		If $\gamma_1(\cala) \geq \E(\log|\eta_1|)$, then~\eqref{max_gamma1} implies $\gamma_1(\cala)=\gamma_1(\tilde \cala)$.
		
		Similarly, by Proposition~\ref{block_all_exp}, for any $2 \leq r \leq \min(m,d)$, one can show inductively that
		\[
		\max_{0\leq \ell \leq r} \Bigl(\sum_{s=1}^\ell \gamma_s(\cala) + (r-\ell) \E(\log | \eta_1|) \Bigr)
		=
		\max_{0\leq \ell \leq r} \Bigl(\sum_{s=1}^\ell \gamma_s(\tilde \cala) + (r-\ell) \E(\log | \eta_1|) \Bigr).
		\]
		This yields the result under the assumption $\gamma_1(\cala) \geq \cdots \geq \gamma_r(\cala) \geq \E(\log | \eta_1|)$.
	\end{proof}
	
	The counterpart of part~(b) of Proposition~\ref{pert_thm} of this result is stated below.

	\begin{proposition}\label{pert_thm2}
		Fix $m,d\in\mathbb N$, a matrix $V\in\mathcal M_{d\times m}(\mathbb R)$ of rank $m$, and an invertible
		matrix $A\in\mathcal M_{d\times d}(\mathbb R)$.
		Let $(U_n,\eta_n)_{n\ge1}$ be a random stationary ergodic sequence with
		$U_n\in\mathcal M_{d\times m}(\mathbb R)$ and $\eta_n\in\mathbb R$, and define
		\[
		A_n:=\eta_n A + U_nV^T,\qquad 
		\widetilde A_n:=\eta_n I_m + V^T A^{-1}U_n .
		\]
		Assume
		\[
		\mathbb P(AU_1V^T = U_1V^T A)=1 .
		\]
		Then, writing $\cala=(A_n)$ and $\widetilde\cala=(\widetilde A_n)$,
		\[
		-\log\rho(A^{-1}) + \max\!\Big(\mathbb E\log|\eta_1|,\,\gamma_1(\widetilde\cala)\Big)
		\;\le\;
		\gamma_1(\cala)
		\;\le\;
		\log\rho(A) + \max\!\Big(\mathbb E\log|\eta_1|,\,\gamma_1(\widetilde\cala)\Big).
		\]
	\end{proposition}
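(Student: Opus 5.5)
We follow the proof of part~(b) of Proposition~\ref{pert_thm}, replacing the rank-one input from part~(a) with Lemma~\ref{pthm1}. Set $B_n:=\eta_n I_d + A^{-1}U_nV^T$. Then $A_n = A B_n$. The commutation hypothesis $AU_nV^T=U_nV^TA$ gives $A^{-1}U_nV^T = U_nV^TA^{-1}$, so $A$ commutes with every $B_n$. Moving the factors of $A$ to the left therefore gives
\[
X_n = A^n Y_n, \qquad Y_n := B_n\cdots B_1 .
\]
Submultiplicativity yields $\|X_n\|\le \|A^n\|\,\|Y_n\|$ and $\|Y_n\|\le \|A^{-n}\|\,\|X_n\|$. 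After taking $\frac1n\log$ and using Gelfand's formula $\|A^{\pm n}\|^{1/n}\to\rho(A^{\pm1})$, we obtain
\[
\gamma_1(\calb)-\log\rho(A^{-1})\le\gamma_1(\cala)\le \log\rho(A)+\gamma_1(\calb), \qquad \calb:=(B_n).
\]
Here $\gamma_1(\calb)$ exists by Oseledets, provided $\E\log^+\|B_1\|<\infty$; this follows from \eqref{f_cond} for $\cala$, since $B_1=A^{-1}A_1$.

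It therefore remains to prove
\[
\gamma_1(\calb)=\max\bigl(\E\log|\eta_1|,\gamma_1(\widetilde\cala)\bigr).
\]
We apply the construction in the proof of Lemma~\ref{pthm1} with $U_n$ replaced by $A^{-1}U_n$. Put
\[
N_n=\begin{bmatrix} B_n & A^{-1}U_n\\ O & \eta_n I_m\end{bmatrix}.
\]
Conjugating by $R$ as in \eqref{cooliden_1} gives
\[
\tilde N_n=\begin{bmatrix}\eta_n I_d & V\\ O & \widetilde A_n\end{bmatrix},
\]
and $\widetilde A_n=\eta_n I_m+V^TA^{-1}U_n$ is exactly the matrix in the statement. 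Conjugation by the fixed matrix $R$ preserves $\gamma_1$. Lemma~\ref{blockt} (in the form of \eqref{max_gamma1}) then gives
\[
\max(\gamma_1(\calb),\E\log|\eta_1|)=\max(\gamma_1(\widetilde\cala),\E\log|\eta_1|).
\]

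The main obstacle is removing the maximum on the left, i.e.\ showing $\gamma_1(\calb)\ge \E\log|\eta_1|$. We use the rank hypothesis on $V$. Since $V$ has rank $m$, $\ker V^T$ is a subspace of dimension $d-m$. If $d>m$ it is nonzero, and $B_n$ acts on it as $\eta_n I$, because $B_nx=\eta_n x + A^{-1}U_n(V^Tx)=\eta_n x$ for $x\in\ker V^T$. Taking a unit vector $x\in\ker V^T$ gives
\[
\frac1n\log\|Y_nx\|=\frac1n\sum_{k=1}^n\log|\eta_k|\to\E\log|\eta_1|,
\]
so $\gamma_1(\calb)\ge\E\log|\eta_1|$.

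In the square case $d=m$ the kernel is trivial, and we instead argue through the inverse. Since $V$ is invertible, $\widetilde A_n = V^T B_n V^{-T}$. So $\calb$ and $\widetilde\cala$ are conjugate by a fixed matrix and have equal $\gamma_1$, which gives the desired identity directly without the block argument. (We may even take this as the first case.)

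In both cases $\gamma_1(\calb)=\max(\E\log|\eta_1|,\gamma_1(\widetilde\cala))$. Substituting this into the two-sided bound gives the proposition.

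For Lemma~\ref{blockt} we need its integrability and invertibility hypotheses. We assume these as implicitly in force, as in Lemma~\ref{pthm1}, or note that only the upper-bound direction of Furstenberg--Kifer (which needs only $\log^+$ integrability) together with the explicit lower bounds above is really used. This bookkeeping is a secondary point we would check.
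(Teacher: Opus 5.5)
Your proof has a genuine gap in the square case $d=m$. It cannot be repaired there, because the statement itself fails in that case. When $d=m$, your conjugation $\widetilde A_n=V^TB_nV^{-T}$ shows only $\gamma_1(\calb)=\gamma_1(\widetilde\cala)$. Your concluding sentence ``in both cases $\gamma_1(\calb)=\max(\E\log|\eta_1|,\gamma_1(\widetilde\cala))$'' also needs $\gamma_1(\widetilde\cala)\ge\E\log|\eta_1|$. That is exactly the obstacle you isolated, and you never establish it when $\ker V^T=\{0\}$. It is false in general. Take $d=m=1$, $A=V=1$, $\eta_n\equiv1$, $U_n\equiv-\tfrac12$. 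All hypotheses hold (rank, commutation, stationarity), and $A_n=\widetilde A_n=\tfrac12$, so $\gamma_1(\cala)=-\log2$. The claimed lower bound, however, is $-\log\rho(A^{-1})+\max(0,-\log2)=0$. The same failure occurs in any dimension with $A=V=I_d$ and $U_n=-\tfrac12\eta_nI_d$. So for $d=m$ only the upper bound survives, and it does follow from your identity $\gamma_1(\calb)=\gamma_1(\widetilde\cala)\le\max(\E\log|\eta_1|,\gamma_1(\widetilde\cala))$.

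For $m<d$ your proof is correct and is essentially the paper's. Both use the factorization $X_n=A^nY_n$ with Gelfand's formula, followed by a Furstenberg--Kifer identification of $\gamma_1(\calb)$. The paper reaches that identification more directly. It notes that $K=\ker V^T$ is invariant with $B_n|_K=\eta_n I$, and that the induced action on $\rr^d/K\cong\rr^m$ (via $x\mapsto V^Tx$) is $\widetilde A_n$. Hence $B_n$ itself is block upper triangular with diagonal blocks $\eta_nI_{d-m}$ and $\widetilde A_n$, and Lemma~\ref{blockt} gives the maximum at once. Your $(d+m)$-dimensional dilation also works, but it needs your separate kernel argument to remove the maximum. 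Also, its off-diagonal block is $A^{-1}U_n$, not the $V$ copied from the paper's display; this is harmless for Lemma~\ref{blockt} up to integrability of $U_1$. The paper's proof has the same blind spot as yours: when $K=\{0\}$ there is no $\eta_n$-block for Lemma~\ref{blockt} to contribute. The proposition should therefore carry the hypothesis $m<d$. Under that hypothesis your argument is complete, modulo the integrability caveats you flag, which the paper also leaves implicit.
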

	
	\begin{proof}
		Define
		\[
		\widehat A_n := \eta_n I_d + A^{-1}U_nV^T .
		\]
		By the commutation assumption, $A$ commutes with $\widehat A_n$ for every $n$, and hence
		\[
		X_n:=A_n\cdots A_1 = A^n \widehat X_n,
		\qquad\text{where } \widehat X_n:=\widehat A_n\cdots \widehat A_1 .
		\]
		Therefore, by submultiplicativity,
		\[
		\|X_n\|\le \|A^n\|\,\|\widehat X_n\|
		\quad\text{and}\quad
		\|\widehat X_n\|=\|A^{-n}X_n\|\le \|A^{-n}\|\,\|X_n\|.
		\]
		Taking $\frac1n\log$ and letting $n\to\infty$, and using Gelfand’s formula
		$\lim_{n\to\infty}\frac1n\log\|A^{\pm n}\|=\log\rho(A^{\pm1})$, gives
		\[
		-\log\rho(A^{-1})+\gamma_1(\widehat\cala)\le \gamma_1(\cala)\le \log\rho(A)+\gamma_1(\widehat\cala).
		\]
		
		It remains to identify $\gamma_1(\widehat\cala)$.
		Let $K:=\ker(V^T)$, which has dimension $d-m$ (since $\mathrm{rank}(V)=m$). For $x\in K$,
		$V^Tx=0$ and hence $\widehat A_n x=\eta_n x$, so vectors in $K$ grow at rate $\mathbb E\log|\eta_1|$.
		
		On the quotient space $\mathbb R^d/K$, the induced action is computed via the surjective map
		$q(x)=V^Tx$:
		\[
		q(\widehat A_n x)=V^T(\eta_n x + A^{-1}U_nV^Tx)
		=\bigl(\eta_n I_m + V^TA^{-1}U_n\bigr)\,q(x)
		=\widetilde A_n\,q(x).
		\]
		Thus the cocycle induced on the quotient is precisely $\widetilde\cala$.
		By the block/extension Furstenberg--Kifer lemma (Lemma~\ref{blockt}),
		\[
		\gamma_1(\widehat\cala)=\max\big(\mathbb E\log|\eta_1|,\gamma_1(\widetilde\cala)\big).
		\]
		Substituting into the earlier inequality yields the claimed bounds.
	\end{proof}

	\section{Shape graphs}
	\label{shape}	
	
	We conclude these notes by presenting a generalization of Lemma~\ref{blockt}, partially inspired by the proof of Theorem~A in~\cite{pinkus}. As a prelude to this discussion, we first provide an alternative proof of the upper bound in Lemma~\ref{blockt}, based on the observation that zeros in specific entries of \(A_n\) allow the computation of the Lyapunov exponents in terms of those associated with matrices of simpler structure. We note that the proof previously given for the theorem offers more precise insight into the growth behavior of the blocks and, in particular, can be adapted to establish Proposition~\ref{thm100}. 
	
	For the purpose of this section, let \(\mathcal{A}:=(A_n)_{n\in \mathbb{N}}\) be a random, stationary, and ergodic sequence of matrices as in the statement of Lemma~\ref{blockt}, and write each matrix \(A_n\) as
	\[
	A_n := A_{n,1} + A_{n,2},
	\]
	where \(A_{n,1}\) denotes the block diagonal part and \(A_{n,2}\) the remaining strictly upper triangular component of \(A_n\). \par\noindent{\bf Norm convention.} Throughout Section~\ref{shape} we take $\|\cdot\|$ to be the operator norm induced by the vector $\ell^1$-norm (maximum absolute column sum); we write $\|\cdot\|_1$ when clarity is helpful. By equivalence of norms on the finite-dimensional space $\mathcal M_d$, this choice does not change the Lyapunov exponents and \eqref{f_cond} holds for $\|\cdot\|$ if and only if it holds for $\|\cdot\|_1$. Then
	\[
	X_n = A_{n,1}\cdots A_{1,1}
	+ \sum_{j=1}^n \sum_{1\le i_1 < \cdots < i_j\le n}
	A_{n,1}\cdots A_{i_j+1,1}\, A_{i_j,2}\, A_{i_j-1,1}\cdots A_{i_{j-1}+1,1}\, A_{i_{j-1},2}\,\cdots.
	\]
	It is straightforward to verify that multiplying a block diagonal matrix by a strictly block upper triangular matrix (with the same block sizes) produces a strictly block upper triangular matrix. Moreover, multiplying two strictly upper triangular matrices yields another strictly upper triangular matrix with fewer nonzero superdiagonals—a property reminiscent of the nilpotency of strictly upper triangular matrices. Consequently, all products with \(j > d\) vanish, and therefore
	\[
	X_n = A_{n,1}\cdots A_{1,1}
	+ \sum_{j=1}^d \sum_{1 \leq i_1 < \cdots < i_j \leq n}
	A_{n,1}\cdots A_{i_j+1,1}\, A_{i_j,2}\, A_{i_j-1,1}\cdots A_{i_{j-1}+1,1}\, A_{i_{j-1},2}\,\cdots.
	\]
	
	Fix $\varepsilon>0$. We will use the following standard Borel--Cantelli estimate (applied to $Y_n=\log^+\|A_{n,2}\|_1$).
	
	\begin{lemma}\label{lem:bc_log}
		Let $(Y_n)_{n\ge 1}$ be a stationary sequence of nonnegative random variables with $\E(Y_1)<\infty$. Then for every $\varepsilon>0$,
		\[
		\sum_{n\ge 1} \pp(Y_n>\varepsilon n)<\infty,
		\qquad\text{and hence}\qquad 
		\pp(Y_n>\varepsilon n\ \text{\rm i.o.})=0.
		\]
	\end{lemma}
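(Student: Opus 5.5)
The plan is to use the layer-cake (tail-sum) representation of the expectation together with stationarity, and then apply the first Borel--Cantelli lemma. Since $Y_n\ge 0$ and $Y_n$ has the same law as $Y_1$ for every $n$ by stationarity, we have $\pp(Y_n>\varepsilon n)=\pp(Y_1>\varepsilon n)=\pp(Y_1/\varepsilon>n)$. The summability claim therefore reduces to a statement about the single random variable $Z:=Y_1/\varepsilon$, which is nonnegative with $\E(Z)=\E(Y_1)/\varepsilon<\infty$.

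The key step is the standard comparison between the tail sum and the integral of the tail. For each $n\ge1$, monotonicity of $t\mapsto \pp(Z>t)$ gives $\pp(Z>n)\le\int_{n-1}^{n}\pp(Z>t)\,dt$. Summing over $n\ge1$ yields
\[
\sum_{n\ge1}\pp(Z>n)\le\int_0^\infty \pp(Z>t)\,dt=\E(Z)<\infty,
\]
where the last identity is the layer-cake formula, obtained from Tonelli's theorem applied to $Z=\int_0^\infty \odin_{\{Z>t\}}\,dt$. Alternatively, one can write $\sum_{n\ge1}\odin_{\{Z>n\}}\le Z$ pointwise, since the number of positive integers strictly below $Z$ is at most $Z$, and then take expectations using monotone convergence. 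This second route avoids any integral manipulation, so I would present it.

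Once summability is established, the ``hence'' part is just the first Borel--Cantelli lemma applied to the events $\{Y_n>\varepsilon n\}$. That lemma requires no independence, so ergodicity is not needed here; stationarity (in fact, only identical marginal distributions) suffices.

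There is no real obstacle. The only points requiring care are noting that nothing beyond equal marginals is used, and handling the boundary convention, for instance $\log^+0=0$, so that $Y_n$ is genuinely a finite nonnegative random variable. With the intended application $Y_n=\log^+\|A_{n,2}\|_1$, integrability follows from \eqref{f_cond}, since $\|A_{n,2}\|_1\le\|A_n\|_1$ (deleting entries cannot increase the maximum absolute column sum). The consequence will be that almost surely $\|A_{n,2}\|_1\le e^{\varepsilon n}$ for all large $n$.
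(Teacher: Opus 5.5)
Your proposal is correct and follows the paper's proof: stationarity reduces everything to the tail of $Y_1$, the tail sum is bounded by $\E(Y_1)/\varepsilon$ via the layer-cake comparison, and the first Borel--Cantelli lemma (which needs no independence) finishes. Your pointwise bound $\sum_{n\ge1}\odin_{\{Z>n\}}\le Z$ is simply the discrete form of the paper's step of partitioning the tail integral into intervals of length $\varepsilon$.
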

	\begin{proof}
		By stationarity, $\pp(Y_n>\varepsilon n)=\pp(Y_1>\varepsilon n)$. Using the tail-integral identity
		$$\E(Y_1)=\int_0^\infty \pp(Y_1>t)\,dt$$ and partitioning the integral into intervals of length $\varepsilon$ gives
		\[
		\sum_{n\ge 1}\pp(Y_1>\varepsilon n)
		\le \frac{1}{\varepsilon}\int_0^\infty \pp(Y_1>t)\,dt
		= \frac{1}{\varepsilon}\E(Y_1)<\infty.
		\]
		The Borel--Cantelli lemma yields the claim.
	\end{proof}
	
	Apply Lemma~\ref{lem:bc_log} with $Y_n:=\log^+\|A_{n,2}\|_1$. Since $A_{1,2}$ is obtained from $A_1$ by zeroing some entries, the induced $\ell^1$ operator norm is monotone, so $\|A_{1,2}\|_1\le \|A_1\|_1$ and hence $\E\log^+\|A_{1,2}\|_1<\infty$ by \eqref{f_cond} (equivalently for $\|\cdot\|_1$). Therefore,
	\[
	\pp\!\left(\log^+\|A_{n,2}\|_1>\varepsilon n\ \text{\rm i.o.}\right)=0.
	\]
	In particular, almost surely there exists a (random) $N_\varepsilon<\infty$ such that $\|A_{n,2}\|_1\le e^{\varepsilon n}$ for all $n\ge N_\varepsilon$. Setting
	\[
	C_\varepsilon:=\max\Bigl\{1,\ \max_{1\le n\le N_\varepsilon}\|A_{n,2}\|_1 e^{-\varepsilon n}\Bigr\},
	\]
	we obtain the uniform bound
	\[
	\|A_{n,2}\|_1\le C_\varepsilon e^{\varepsilon n}\qquad\text{for all }n\ge 1.
	\]
	
	Consequently, using the expansion of $X_n$ above and the fact that any nonzero monomial contains at most $d-1$ factors from the strictly upper-triangular part $(A_{k,2})$, we obtain that for all sufficiently large $n$, almost surely,
	\begin{equation}\label{X_n_uppebound1}
		\|X_n\|_1
		\le C_\varepsilon^{\,d-1}\, n^{d}\, e^{(d-1)\varepsilon n}\,
		\max_{1\le j<d}\ \max_{1\le i_1<\cdots<i_j\le n}\ 
		\prod_{r=0}^{j}\big\|A_{i_r+1,1}\cdots A_{i_{r+1}-1,1}\big\|_1,
	\end{equation}
	where by convention $i_0:=0$ and $i_{j+1}:=n+1$, and an empty product is interpreted as the identity matrix. (We used the crude bound $1+\sum_{j=1}^{d-1}\binom{n}{j}\le n^d$ on the number of summands in the expansion.)
	
	Let \(\mathcal{A}_1 := (A_{n,1})_{n \in \mathbb{N}}\). Note that $A_{n,1}$ is diagonal (hence block diagonal), so for any $1\le i\le j$,
	\[
	\|A_{j,1}\cdots A_{i,1}\|_1=\max_{p\in[d]}\ \prod_{t=i}^{j}\big|(A_{t,1})_{pp}\big|.
	\]
	For each $p\in[d]$, set
	\[
	\beta_p:=\lim_{n\to\infty}\frac1n\log\prod_{t=1}^{n}\big|(A_{t,1})_{pp}\big|\qquad\text{a.s.},
	\qquad\text{so that}\qquad
	\gamma_1(\mathcal{A}_1)=\max_{p\in[d]}\beta_p.
	\]
	Fix $\varepsilon>0$. Since $\frac1n\sum_{t=1}^{n}\log |(A_{t,1})_{pp}|\to \beta_p$ almost surely, the random variables
	\[
	G_{\varepsilon,p}^+:=\sup_{n\ge1}\exp\!\Big(\sum_{t=1}^{n}\log |(A_{t,1})_{pp}|-(\beta_p+\varepsilon)n\Big),
	\qquad
	G_{\varepsilon,p}^-:=\sup_{n\ge1}\exp\!\Big(-\sum_{t=1}^{n}\log |(A_{t,1})_{pp}|-(-\beta_p+\varepsilon)n\Big)
	\]
	are almost surely finite, and therefore with $G_{\varepsilon,p}:=\max(G_{\varepsilon,p}^+,G_{\varepsilon,p}^-)$ we have for all $n\ge1$,
	\[
	\prod_{t=1}^{n}\big|(A_{t,1})_{pp}\big|\le G_{\varepsilon,p}\,e^{(\beta_p+\varepsilon)n},
	\qquad
	\prod_{t=1}^{n}\big|(A_{t,1})_{pp}\big|^{-1}\le G_{\varepsilon,p}\,e^{(-\beta_p+\varepsilon)n}.
	\]
	Consequently, for all $1\le i\le j$,
	\beq
	&& \prod_{t=i}^{j}\big|(A_{t,1})_{pp}\big|
	=\Big(\prod_{t=1}^{j}\big|(A_{t,1})_{pp}\big|\Big)\Big(\prod_{t=1}^{i-1}\big|(A_{t,1})_{pp}\big|\Big)^{-1} \\
	&& \qquad \le G_{\varepsilon,p}^2\, e^{\beta_p(j-i+1)}\,e^{\varepsilon(j+i-1)}
	\le G_{\varepsilon,p}^2\, e^{\beta_p(j-i+1)}\,e^{2\varepsilon j}.
	\feq
	Taking the maximum over $p\in[d]$ and setting $G_\varepsilon:=\max_{p\in[d]}G_{\varepsilon,p}$ yields the uniform estimate
	\begin{equation}\label{eq:diag-seg-bound}
		\|A_{j,1}\cdots A_{i,1}\|_1\le G_\varepsilon^2\, e^{\gamma_1(\mathcal{A}_1)(j-i+1)}\,e^{2\varepsilon j}
		\qquad\text{for all }1\le i\le j.
	\end{equation}
	
	Applying \eqref{eq:diag-seg-bound} to each factor in \eqref{X_n_uppebound1} and using that each endpoint is at most $n$ and that $j\le d-1$ gives
	\[
	\|X_n\|_1
	\le \widetilde C_\varepsilon\, n^{d}\, \exp\!\big((\gamma_1(\mathcal{A}_1)+C\,\varepsilon)n\big),
	\]
	for some deterministic constant $C=C(d)$ and an almost surely finite (random) constant $\widetilde C_\varepsilon$.
	Since $\varepsilon>0$ is arbitrary, we conclude $\gamma_1(\mathcal{A}) \le \gamma_1(\mathcal{A}_1)$.
	Finally, for a diagonal (more generally, block-diagonal) cocycle, the top Lyapunov exponent is the maximum of the top Lyapunov exponents of its diagonal blocks, completing the proof.

	We now introduce several definitions to abstract and extend the preceding argument to a broader class of matrices beyond the block triangular case.
	
	Let \(\wedge\) denote the entrywise “AND” operation. Fix \(d, k \in \mathbb{N}\) with \(k \leq d^2\).  
	A \emph{shape set of order~\(d\)}, denoted \(\mathcal{L}\), is a collection of nonzero, zero–one matrices of size \(d \times d\),
	\[
	\mathcal{L} := \{ L_j \mid j \in [k] \},
	\]
	such that \(L_i \wedge L_j = O_d\) for all distinct \(i, j \in [k]\).  
	For any matrix \(A \in \calm_d\), we define its \emph{shape}, denoted \(\shape{A}\), as the \(d \times d\) zero–one matrix whose \((i,j)\)-th entry is one if \(A(i,j) \neq 0\), and zero otherwise.
	
	We next introduce the notion of an \emph{\(\mathcal{L}\)-shape graph} \((\mathcal{V}, \mathcal{T})_{\mathcal{L}}\), defined by its set of vertices~\(\mathcal{V}\) and transition map \(\mathcal{T} : \mathcal{V} \times \mathcal{L} \to \mathcal{V}\).  
	Given the shape set~\(\mathcal{L}\), define the set of vertices as
	\begin{equation} \label{V_def}
		\mathcal{V} := \Bigl\{ \shape{L_n\cdots L_1} \,\big|\, n \in \mathbb{N},\ (L_1, \ldots, L_n) \in \mathcal{L}^n \Bigr\}.
	\end{equation}
	Each vertex can be regarded as a zero–one matrix when necessary.  
	Two vertices \(v_1, v_2 \in \mathcal{V}\) are connected, written \(v_1 \to v_2\), if and only if there exists some \(L \in \mathcal{L}\) such that
	\[
	v_2 = \mathcal{T}(v_1, L) := \shape{L v_1}.
	\]
	
	Finally, given any sequence of matrices \(\mathcal{A} := (A_n)_{n \geq 0}\), we use the shape set~\(\mathcal{L}\) to decompose each \(A_n\) into a sum of \(k\) “non-overlapping” matrices \(\{A_{n,1}, \ldots, A_{n,k}\}\) such that
	\begin{equation} \label{A_ni_def}
		A_n = \sum_{j=1}^k A_{n,j}, 
		\quad \text{where} \quad \shape{A_{n,j}} \vee L_j = L_j.
	\end{equation}
	
	To clarify these definitions, we conclude this section with several examples, including one illustrating a shape graph appropriate for triangular matrices.

	\begin{figure} 	
		\centering
		\begin{tikzpicture}[shorten >=1pt,node distance=1.75cm,on grid,auto]
			\node[state] (q_0)   {$v_1$};
			\node[state] (q_1) [right=of q_0] {$v_2$};
			\node[state] (q_2) [right=of q_1] {$v_3$};
			\node[state](q_3) [right=of q_2] {$v_*$};
			\path[->]
			(q_0) edge  node {$L_2$} (q_1)
			edge [loop above] node {$L_1$} ()
			(q_1) edge  node  {$L_2$} (q_2)
			edge [loop above] node {$L_1$} ()
			(q_2) edge  node {$L_2$} (q_3)
			edge [loop above] node {$L_1$} ()
			(q_3) edge  node {$ $} (q_3)
			edge [loop above] node {$L_1, L_2$} ();
		\end{tikzpicture}
		\caption{The shape graph for \eqref{exm1-d1} and \eqref{exm1-v1}.}
		\label{exm1-fig}
	\end{figure}
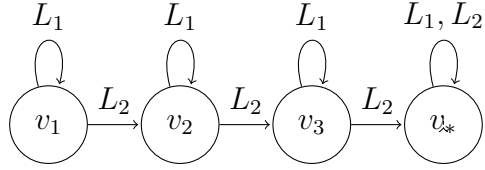
	
	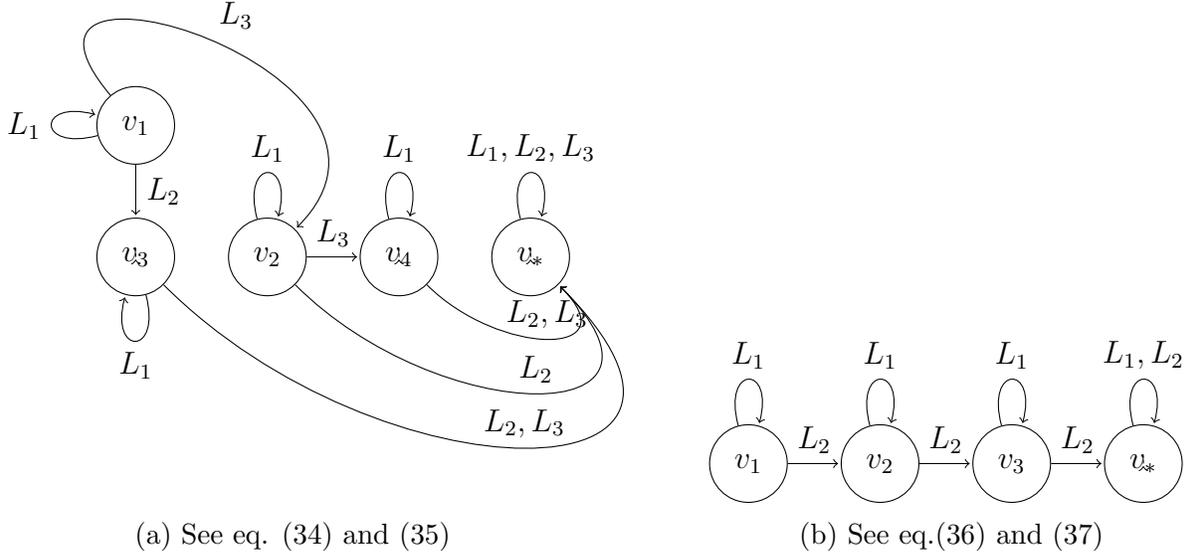
\begin{figure}[!t]
		\begin{subfigure}{.525\textwidth} \centering
			\begin{tikzpicture}[shorten >=1pt,node distance=1.75cm,on grid,auto]
				\node[state] (q_0)   {$v_1$};
				\node[state] (q_1) [below=of q_0] {$v_3$};
				\node[state] (q_2) [right=of q_1] {$v_2$};
				\node[state] (q_3) [right=of q_2] {$v_4$};
				\node[state](q_4) [right=of q_3] {$v_*$};
				\path[->]
				(q_0) edge  node {$L_2$} (q_1)
				edge [loop left] node {$L_1$} ()
				(q_1) edge  node  {$ $} (q_1)
				edge [loop below] node {$L_1$} ()
				(q_2) edge  node {$L_3$} (q_3)
				edge [loop above] node {$L_1$} ()
				(q_3) edge  node {$ $} (q_3)
				edge [loop above] node {$L_1$} ()
				(q_4) edge  node {$ $} (q_4)
				edge [loop above] node {$L_1, L_2, L_3$} ();
				\draw[->]
				(q_0) to [out=130,in=45,looseness=2.5] node {$L_3$} (q_2);
				\draw[->]
				(q_1) to [out=315, in=315,looseness=2] node {$L_2,L_3$} (q_4);
				\draw[->]
				(q_2) to [out=315, in=315,looseness=2] node {$L_2$} (q_4);
				\draw[->]
				(q_3) to [out=315, in=315,looseness=2] node {$L_2,L_3$} (q_4);
			\end{tikzpicture}
			\caption{See eq. \eqref{exm2_d1} and \eqref{exm2_v1}} 
		\end{subfigure}
		\begin{subfigure}{.525\textwidth} \centering
			\begin{tikzpicture}[shorten >=1pt,node distance=1.75cm,on grid,auto]
				\node[state] (q_0)   {$v_1$};
				\node[state] (q_1) [right=of q_0] {$v_2$};
				\node[state] (q_2) [right=of q_1] {$v_3$};
				\node[state] (q_3) [right=of q_2] {$v_*$};
				\path[->]
				(q_0) edge  node {$L_2$} (q_1)
				edge [loop above] node {$L_1$} ()
				(q_1) edge  node  {$L_2$} (q_2)
				edge [loop above] node {$L_1$} ()
				(q_2) edge  node {$L_2$} (q_3)
				edge [loop above] node {$L_1$} ()
				(q_3) edge  node {$ $} (q_3)
				edge [loop above] node {$L_1, L_2$} ();
			\end{tikzpicture}
			\caption{See eq.\eqref{exm2_d2} and \eqref{exm2_v2}} 
		\end{subfigure}	
		\caption{Two different graphs for the matrices described by \eqref{two_exm}}
		\label{exm7-fig}	
	\end{figure}	
	
	\begin{example}
		Let $\cala$ be a sequence of $3 \times 3$ upper-triangular matrices. Write each $A_i$ as the sum of its diagonal part and its strictly upper triangular part. Then, an appropriate shape set is
		\beqn \label{exm1-d1}
		\call := \left\{ 
		L_1 := I_3,\ 
		L_2 := 
		\begin{bmatrix}
			0 & 1 & 1 \\
			0 & 0 & 1 \\
			0 & 0 & 0
		\end{bmatrix} 
		\right\}.
		\feqn
		By \eqref{V_def}, the set of vertices of the $\call$-shape graph is then
		\beqn \label{exm1-v1}
		\calv := \left\{ 
		v_1 := I_3,\ 
		v_2 := 
		\begin{bmatrix}
			0 & 1 & 1 \\
			0 & 0 & 1 \\
			0 & 0 & 0
		\end{bmatrix},\ 
		v_3 := 
		\begin{bmatrix}
			0 & 0 & 1 \\
			0 & 0 & 0 \\
			0 & 0 & 0
		\end{bmatrix},\ 
		v_* := O_3 
		\right\}.
		\feqn
		The corresponding shape graph is shown in Figure~\ref{exm1-fig}.
	\end{example}
	
	Our next example concerns the case where the matrices involved are not both upper triangular.
	
	\begin{example}
		Let $\cala$ be a sequence of matrices sampled from the set
		\beqn \label{two_exm}
		\left\{ 
		\begin{bmatrix}
			2  & 0 & 0 & 0 \\
			0 & 1 & -1 & -1 \\
			0 & 0 & -1 & 0 \\
			0 & 0 & 0 & -2
		\end{bmatrix},\ 
		\begin{bmatrix}
			1  & 0 & 0 & 0 \\
			0 & 2 & 0 & 0 \\
			0 & 0 & 3 & 0 \\
			5 & 0 & 0 & 4
		\end{bmatrix} 
		\right\}.
		\feqn
		In this case, an appropriate choice for the shape set $\call$ is
		\beqn \label{exm2_d1}
		\call = \left\{
		L_1:=I_4,\ 
		L_2:= 
		\begin{bmatrix}
			0  & 0 & 0 & 0 \\
			0 & 0 & 0 & 0 \\
			0 & 0 & 0 & 0 \\
			1 & 0 & 0 & 0
		\end{bmatrix},\ 
		L_3:= 
		\begin{bmatrix}
			0  & 0 & 0 & 0 \\
			0 & 0 & 1 & 1 \\
			0 & 0 & 0 & 0 \\
			0 & 0 & 0 & 0
		\end{bmatrix}
		\right\}.
		\feqn
		It is straightforward to verify that with this choice of $\call$, the set of vertices is
		\beqn \label{exm2_v1}
		\calv := \left\{ 
		v_1:=L_1,\ 
		v_2:=L_2,\ 
		v_3:=L_3,\ 
		v_4:= 
		\begin{bmatrix}
			0  & 0 & 0 & 0 \\
			1 & 0 & 0 & 0 \\
			0 & 0 & 0 & 0 \\
			0 & 0 & 0 & 0
		\end{bmatrix},\ 
		v_*:=O_4 
		\right\}.
		\feqn
		The corresponding shape graph is depicted in part~(a) of Fig.~\ref{exm7-fig}. We note that the shape set $\call$ is not unique, and therefore the resulting graph is also not unique. For instance, another possible choice for $\call$ in this example is
		\beqn \label{exm2_d2}
		\call = \left\{  
		L_1 := I_4,\ 
		L_2 := 
		\begin{bmatrix}
			0 & 0 & 0 & 0 \\
			0 & 0 & 1 & 1 \\
			0 & 0 & 0 & 0 \\
			1 & 0 & 0 & 0
		\end{bmatrix}
		\right\} .
		\feqn
		In this case, the set of vertices becomes
		\beqn \label{exm2_v2}
		\calv := \left\{ 
		v_1 := L_1,\ 
		v_2 := L_2,\ 
		v_3 := 
		\begin{bmatrix}
			0 & 0 & 0 & 0 \\
			1 & 0 & 0 & 0 \\
			0 & 0 & 0 & 0 \\
			0 & 0 & 0 & 0
		\end{bmatrix},\ 
		v_* := O_4 
		\right\}.
		\feqn
		The corresponding shape graph is shown in part~(b) of Fig.~\ref{exm7-fig}.
	\end{example}

	With the notion of the shape graph established, our strategy is to exploit its structural properties to analyze products of matrices in $\cala$, thereby obtaining meaningful bounds on the Lyapunov exponents of $\cala$. The main result of this section is the following theorem.
	
	\begin{theorem}
		\label{th-shapes}
		Let $\call$ be a shape set of order $d$ with $k$ elements, and let $(\calv, \calt)_{\call}$ denote its corresponding shape graph. Assume that the shape graph contains only loops of length one (self-loops), and that each nonzero vertex has at most one self-loop. Let $\cala = (A_n)_{n \in \nn}$ be a random, stationary, and ergodic sequence of $d \times d$ matrices, each admitting a decomposition as in~\eqref{A_ni_def} with respect to the shape set $\call$. Assume further that:
		\begin{itemize}
			\item[(i)] For every $1 \leq s \leq k$,
			\[
			\E\big(\log^+ \|A_{1,s}\|\big) < \infty.
			\]
			\item[(ii)] Let $\calh \subset \calv \setminus \{ O_d \}$ be the set of vertices with a self-loop, and let $\call_{\calh}$ be the subset of labels corresponding to their self-loops. For every $s \in \call_\calh$, the matrix $A_{1,s}$ is almost surely invertible, and
			\[
			\E\big(\log^+ \|A_{1,s}^{-1}\|\big) < \infty.
			\]
		\end{itemize}
		Define
		\[
		\beta_s := \lim_{n \to \infty} \frac{1}{n} \log \left\| A_{n,s} A_{n-1,s} \cdots A_{1,s} \right\|, \quad \text{a.s.},
		\]
		and set
		\[
		\beta := \max_{s \in \call_\calh} \beta_s.
		\]
		
		Then:
		\begin{itemize}
			\item[(a)] The top Lyapunov exponent satisfies
			\[
			\gamma_1(\cala) \leq \beta + \log k.
			\]
			Moreover, if $O_d \in \calv$, define
			\[
			k_* := \max_{v \in \calv \setminus \{O_d\}} \#\bigl\{\, s \in [k] : \calt(v,L_s)\neq O_d \,\bigr\},
			\]
			so that $k_* \le k$ and $k_*$ is the maximal number of labels that keep a nonzero vertex from transitioning to $O_d$ in one step. Then
			\[
			\gamma_1(\cala) \leq \beta + \log k_*.
			\]
			(In particular, if every nonzero vertex has at least one outgoing label that transitions to $O_d$, then $k_* \le k-1$.)
			\item[(b)] Let $\calw \subset \calh\setminus \{O_d\}$ be the set of self-loop vertices $w$ such that every directed path in the shape graph that ends at $w$ avoids loop vertices other than $w$ (equivalently, it does not pass through any vertex in $\calh\setminus\{w\}$). Assume moreover that for each $w\in\calw$, if $s$ denotes the label of its self-loop, then there exists $r=r(w)\ge 1$ such that $\shape{L_s^n}=w$ for all $n\ge r$. If, for every $w \in \calw$ and every $v \in \calv\setminus\{w\}$, we have
			\[
			v \wedge w = O_d,
			\]
			and assume in addition that all matrices $A_{n,s}$ are entrywise nonnegative almost surely (so that monomials with the same shape do not cancel entrywise). Then
			\[
			\max_{s \in \call_\calw} \beta_s \leq \gamma_1(\cala),
			\]
			where $\call_\calw$ denotes the set of labels corresponding to the self-loops of vertices in $\calw$.
		\end{itemize}
	\end{theorem}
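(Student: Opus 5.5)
Expand $X_n=\prod_{t=n}^{1}\sum_{s=1}^k A_{t,s}$ into the $k^n$ monomials $A_{n,s_n}\cdots A_{1,s_1}$, indexed by label words $(s_1,\dots,s_n)\in[k]^n$. By submultiplicativity and the disjointness of supports, the shape of each monomial is dominated entrywise by $\shape{L_{s_n}\cdots L_{s_1}}$, which is the endpoint of the walk in $(\calv,\calt)_\call$ that starts at $L_{s_1}$ and follows the labels $s_2,\dots,s_n$. A monomial whose walk hits $O_d$ vanishes identically, because $O_d$ is absorbing. So only walks that stay among nonzero vertices contribute.

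Since the graph has only self-loops and each nonzero vertex has at most one self-loop, such a walk consists of at most $M:=|\calv|$ excursion steps. Between them are runs where the walk sits at a loop vertex $h\in\calh$ and repeatedly reads its unique loop label $s(h)\in\call_\calh$. The monomial therefore factors as a product of at most $M$ "jump" matrices $A_{t,s}$ and at most $M+1$ blocks $A_{j,s(h)}\cdots A_{i,s(h)}$.

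For part (a), I control the two kinds of factors separately.
\begin{itemize}
\item Jump factors: hypothesis (i) and Lemma~\ref{lem:bc_log} give $\|A_{t,s}\|\le C_\veps e^{\veps t}$ for all $t$, almost surely.
\item Run factors: I reproduce the segment estimate \eqref{eq:diag-seg-bound}. Write $A_{j,s}\cdots A_{i,s}=(A_{j,s}\cdots A_{1,s})(A_{i-1,s}\cdots A_{1,s})^{-1}$ and bound the first factor by $G_\veps e^{(\beta_s+\veps)j}$. For the inverse factor I use hypothesis (ii) together with Remark~\ref{reb}(b), so its norm is at most $G_\veps e^{(-\lambda_{\min}+\veps)i}$.
\end{itemize}
This is the main obstacle. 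The inverse gives $-\lambda_{\min}$ rather than $-\beta_s$, so the argument works cleanly only when the loop cocycle has a one-point spectrum; the scalar and diagonal cases used earlier in the paper are of this kind. In general I would instead use Lyapunov regularity in the spirit of \eqref{gr}: since the Grobman coefficient of each stationary loop cocycle vanishes, one still gets $\|A_{j,s}\cdots A_{i,s}\|\le G_\veps^2 e^{\beta_s(j-i+1)+2\veps n}$.

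With these bounds, every surviving monomial has norm at most $C'_\veps e^{(\beta+C\veps)n}$. Summing over all $k^n$ words gives $\gamma_1\le\beta+\log k$. For the refinement I count only surviving words. From a nonzero vertex at most $k_*$ labels avoid $O_d$, and the first letter contributes at most $k$ choices. Hence at most $k\,k_*^{\,n-1}$ words survive, which yields $\beta+\log k_*$.

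For part (b), fix $w\in\calw$ with loop label $s$, and pick a word reaching $w$ at some fixed time $t_0$ with $t_0\ge r(w)$; for example, take $s$ repeated, using $\shape{L_s^n}=w$ for $n\ge r$. By nonnegativity there is no cancellation, so restricted to the support of $w$ we have $X_n\ge$ (entrywise) the sum of all monomials whose shape is supported on $w$. Because $v\wedge w=O_d$ for $v\ne w$, monomials ending at other vertices contribute nothing on $\mathrm{supp}(w)$. Any path ending at $w$ avoids the other loop vertices, so these monomials are $A_{n,s}\cdots A_{t_0+1,s}$ times a bounded-length prefix. In particular the pure word $s^n$ is one of them, and so
\[
\|X_n\|\ge\|X_n\odot w\|\ge\|A_{n,s}\cdots A_{1,s}\|,
\]
where $\odot$ is entrywise masking and both inequalities follow from monotonicity of the $\ell^1$ operator norm under entrywise domination of nonnegative matrices. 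Taking $\frac1n\log$ gives $\beta_s\le\gamma_1(\cala)$. I would check that $A_{n,s}\cdots A_{1,s}$ is indeed supported in $w$ for $n\ge r$, which is exactly the role of the hypothesis on $r(w)$.
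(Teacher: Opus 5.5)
Your overall architecture is the paper's. You expand $X_n$ into label-word monomials, discard words whose shape walk reaches $O_d$, cut each surviving walk into at most $|\calv|$ pieces (single jump steps controlled through Lemma~\ref{lem:bc_log}, and runs of a loop label), and count the surviving words. Your count $k\,k_*^{\,n-1}$ is in fact more careful than the paper's $k_*^n$. Your part (b) is also the paper's argument, with monotonicity of the $\ell^1$ operator norm under masking in place of Frobenius orthogonality, and it is fine.

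The gap is the run-factor estimate, exactly the step you flagged. Invoking Lyapunov regularity ``in the spirit of \eqref{gr}'' does not resolve it. \eqref{gr} is the same factorization through time $1$ with operator norms on both factors, and a vanishing Grobman coefficient does not make $\|B_1^{-1}(i,i)\cdots B_{k-1}^{-1}(i,i)\|$ decay like $e^{-\beta_i(k-1)}$. For a regular cocycle this norm still grows at rate $-\lambda_{\min}$: the constant cocycle $\mathrm{diag}(2,1/2)$ is regular with $\beta=\log 2$, yet $\|B^{-k}\|=2^k$. So \eqref{gr} has precisely the defect you diagnosed, and your bound $\|A_{j,s}\cdots A_{i,s}\|\le G_\veps^2e^{\beta_s(j-i+1)+2\veps n}$ is asserted rather than derived.

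The bound is true, but it needs a different idea. The paper restarts the cocycle instead of factoring through time $1$. Realize $(A_{n,s})$ over an ergodic shift $\theta$. Oseledets' theorem supplies a tempered random constant $\widetilde K_{s,\delta}$ with $\|A_{\ell,s}\cdots A_{1,s}\|_1\le \widetilde K_{s,\delta}(\omega)e^{(\beta_s+\delta)\ell}$ for all $\ell$. Since $A_{t+\ell-1,s}\cdots A_{t,s}$ is this product evaluated at $\theta^{t-1}\omega$, and temperedness gives $\widetilde K_{s,\delta}(\theta^{t-1}\omega)\le K_{s,\delta}e^{\delta t}$, one obtains \eqref{eq:loop_block_bound}.

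If you prefer to use regularity, it must be used on dual bases rather than on operator norms. Write $\Phi(n):=A_{n,s}\cdots A_{1,s}$ and expand along dual bases $(v_q)$, $(w_q)$: $A_{j,s}\cdots A_{i,s}x=\sum_q\langle x,\Phi(i-1)^{-T}w_q\rangle\,\Phi(j)v_q$. Regularity means these bases can be chosen so that the exponent of $v_q$ under $\Phi$ and that of $w_q$ under the adjoint cocycle $\Phi(n)^{-T}$ sum to zero. This makes the $q$-th term at most $D^2e^{\beta_s(j-i+1)+2\veps j}\|x\|$. With either ingredient in place, the rest of your argument goes through.
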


	\subsection{Theorem~\ref{th-shapes} in practice}
	\label{sec:howto-shapes}
	
	Theorem~\ref{th-shapes} is intended as a ``plug-in'' estimate for cocycles whose random matrices admit a sparse decomposition with \emph{non-overlapping supports}.  In many situations, the verification reduces to a finite, combinatorial check on the associated shape graph.
	We record a short checklist.
	
	\begin{enumerate}
		\item \textit{Choose a shape set.}  Pick a shape set $\call=\{L_1,\ldots,L_k\}$ (disjoint supports: $L_i\wedge L_j=O_d$ for $i\neq j$, where $O_d$ is the $d\times d$ zero matrix) that matches the sparsity types present in $A_n$.
		
		\item \textit{Build the shape graph.}  Compute (or reason about) the vertex set $\calv$ in \eqref{V_def} and the transition map $\calt(v,L)=\shape{vL}$.  In practice, $\calv$ is generated by iterating the transitions starting from the labels.
		
		\item \textit{Check the acyclicity condition.}  Verify that the directed graph contains no directed cycles other than self-loops, and that each nonzero vertex has at most one self-loop label.  (In many common constructions, $L_1=I_d$ is the unique self-loop label.)
		
		\item \textit{Identify the loop labels and compute $\beta$.}  Let $\calh$ be the loop vertices and $\call_{\calh}$ their self-loop labels.  Then $\beta=\max_{s\in\call_{\calh}}\beta_s$, where $\beta_s$ is the top Lyapunov exponent of the subcocycle $(A_{n,s})_{n\ge1}$.
		
		\item \textit{Apply the bound.}  Theorem~\ref{th-shapes}(a) gives $\gamma_1(\cala)\le \beta+\log k$, and if $O_d\in\calv$ the sharper $\gamma_1(\cala)\le \beta+\log k_*$.
	\end{enumerate}
	
	We now give two worked illustrations, the first using the non-triangular example introduced above, and the second describing a general ``DAG-supported'' family where the $\log k$ term has a transparent combinatorial meaning.
	
	\begin{example}[A worked bound for the non-triangular example \eqref{two_exm}]
		Assume that $(A_n)_{n\ge1}$ is i.i.d.\ and takes the first matrix in \eqref{two_exm} with probability $p\in(0,1)$ and the second with probability $1-p$.  Use the shape set $\call=\{L_1,L_2,L_3\}$ from \eqref{exm2_d1}, so that $k=3$, and note from Fig.~\ref{exm7-fig}(a) that $O_4\in\calv$ and the shape graph has no directed cycles other than self-loops.
		
		The only self-loop label is $L_1=I_4$, hence $\call_{\calh}=\{1\}$ and $\beta=\beta_1$.  Writing $D_n:=A_{n,1}$ for the diagonal component in the decomposition \eqref{A_ni_def}, we have that $D_n$ is diagonal with entries distributed as
		\[
		(D_n(1,1),D_n(2,2),D_n(3,3),D_n(4,4))=
		\begin{cases}
			(2,1,-1,-2), & \text{with prob. }p,\\
			(1,2,3,4), & \text{with prob. }1-p.
		\end{cases}
		\]
		For diagonal cocycles, $\beta_1$ is explicit:
		\[
		\beta_1=\max_{1\le i\le4}\ \E\big(\log|D_1(i,i)|\big)
		=
		\max\Big\{p\log 2,\ (1-p)\log 2,\ (1-p)\log 3,\ p\log 2+(1-p)\log 4\Big\}.
		\]
		Therefore Theorem~\ref{th-shapes}(a) yields the concrete estimate
		\[
		\gamma_1(\cala)\le \beta_1+\log k_*=\beta_1+\log 2 \qquad(\text{here }k_*=2).
		\]
		This bound is nontrivial in regimes where the diagonal growth $\beta_1$ is modest while the off-diagonal entries induce large transient amplification; the shape-graph estimate isolates the genuine exponential growth (captured by $\beta_1$) from the purely combinatorial proliferation of monomials (captured by $\log 2$).
	\end{example}
	
	\begin{example}[DAG-supported sparsity patterns]
		
		One convenient way to view this family is as a \emph{random transfer-matrix} (or random kinetic-network) model on a directed acyclic network.
		Let $x_n\in\mathbb{R}^d$ be a vector of nonnegative ``masses'' (or partial partition functions) and evolve it by
		\[
		x_n = A_n x_{n-1},\qquad n\ge1.
		\]
		Then $\gamma_1(\cala)$ is the almost-sure exponential growth rate of $\|x_n\|$ and may be interpreted as a \emph{quenched free-energy density}.
		A concrete parameterization is to take (for an ``inverse temperature'' $\theta>0$)
		\[
		D_n=\diag{e^{-\theta E_{n,1}},\ldots,e^{-\theta E_{n,d}}},
		\qquad 
		B_{n,e}=e^{-\theta U_{n,e}}\,E_{ij}\ \ \text{for }e=(i,j)\in E(G),
		\]
		where $(E_{n,i})$ and $(U_{n,e})$ are stationary ergodic energy/edge-weight fields.
		In this case the diagonal subcocycle is explicit and
		\[
		\beta_1=\max_{1\le i\le d}\ \E\big(\log D_1(i,i)\big)
		= -\theta \min_{1\le i\le d}\ \E(E_{1,i}),
		\]
		so Theorem~\ref{th-shapes}(a) bounds the quenched free energy by a simple ``energy--entropy'' upper bound (energy from $\beta_1$, entropy from the $\log|E(G)|$ term).
		\par
		\noindent
		To make this interpretation more concrete, note that the expansion of $X_n=A_n\cdots A_1$ into monomials naturally enumerates \emph{time-ordered paths} in the state space $[d]$.
		Indeed, if we work in a basis where all entries are nonnegative (or simply take absolute values entrywise), then for each coordinate $j$ one can view $(X_n \odin)_j$ as a sum over sequences
		\[
		i_0 \to i_1 \to \cdots \to i_n=j
		\]
		where each step either stays put (a diagonal choice from $D_t$) or follows an allowed directed edge of $G$ (a choice from some $B_{t,e}$).
		The weight of a path is a product of the corresponding random factors along the path, so the total mass is a partition-function-like sum of random path weights.
		The top Lyapunov exponent $\gamma_1(\cala)$ is then the quenched exponential growth rate of this partition function.
		
		Theorem~\ref{th-shapes}(a) can be read as follows: in an acyclic environment, the long-run exponential growth is controlled by the ``best'' loop (the diagonal cocycle, contributing $\beta_1$), while the off-diagonal moves contribute only through the number of admissible local choices.
		The crude entropy term $\log|E(G)|$ comes from bounding the number of admissible label sequences by $|E(G)|^n$; in many applications this can be sharpened by replacing $|E(G)|$ with a smaller quantity such as the maximal out-degree or the spectral radius of an associated transition matrix.

		Fix $d\ge2$ and let $G$ be a directed acyclic graph (DAG) on vertex set $[d]$.  For each directed edge $e=(i,j)\in E(G)$, let $L_e:=E_{ij}$ be the $d\times d$ matrix with a single $1$ in entry $(i,j)$ and zeros elsewhere, and let $L_1:=I_d$.  Then
		\[
		\call:=\{L_1\}\cup\{L_e: e\in E(G)\}
		\]
		is a shape set (supports are disjoint), with $k=1+|E(G)|$.  Consider any stationary ergodic cocycle $(A_n)_{n\ge1}$ of the form
		\[
		A_n = D_n+\sum_{e\in E(G)} B_{n,e},
		\qquad \shape{D_n}\vee L_1=L_1,\ \ \shape{B_{n,e}}\vee L_e=L_e,
		\]
		where $D_n$ is diagonal and each $B_{n,e}$ is supported on the single entry prescribed by $e$.  If $D_n$ is a.s.\ invertible and $\E\log^+\|D_1^{\pm1}\|<\infty$, and if $\E\log^+\|B_{1,e}\|<\infty$ for all $e$, then Theorem~\ref{th-shapes} applies.
		
		The DAG assumption implies that the $\call$-shape graph has no directed cycles other than the self-loops induced by $L_1$ (products of edge labels correspond to directed paths, which cannot return in a DAG).  Hence $\call_{\calh}=\{1\}$ and $\beta=\beta_1$, where
		\[
		\beta_1=\lim_{n\to\infty}\frac1n\log\|D_nD_{n-1}\cdots D_1\|
		=\max_{1\le i\le d}\ \lim_{n\to\infty}\frac1n\sum_{t=1}^n \log|D_t(i,i)|\qquad\text{a.s.}
		\]
		If moreover $O_d\in\calv$ (this holds, for instance, whenever $G$ has no directed path of length $d$), then Theorem~\ref{th-shapes}(a) gives
		\[
		\gamma_1(\cala)\le \beta_1+\log k_*=\beta_1+\log|E(G)| \qquad(\text{here }k_*=|E(G)|).
		\]
		In this family the additive term $\log|E(G)|$ may be interpreted as a coarse ``branching entropy'' that accounts for the number of allowed sparsity types at each multiplication step, while $\beta_1$ captures the genuine exponential growth coming from the diagonal subcocycle.
	\end{example}

	\begin{remark}
		\label{rem:entropy-refine}
		The proof of Theorem~\ref{th-shapes}(a) expands $X_n$ into monomials indexed by label sequences in $\{1,\ldots,k\}^n$ and then applies a union bound over the $k^n$ terms.
		This is where the additive $\log k$ (or $\log k_*$) originates.
		In applications one can often sharpen this step:
		
		\begin{itemize}
			\item If only a subset of labels are ever reachable after a few steps (equivalently, the shape graph quickly collapses), one can replace $k$ by the effective number of labels that actually contribute.
			\item In ``graph-like'' constructions, one can bound the number of admissible label sequences in the expansion underlying part~(a) by the number of directed walks in an auxiliary transition graph (equivalently, by the number of directed paths in the shape graph).
			If $M$ is the adjacency matrix of that transition graph, then the number of length-$n$ walks is at most $\mathbf 1^\top M^{n-1}\mathbf 1\le C\,\rho(M)^{\,n}$, where $\rho(M)$ is the spectral radius.
			Thus the additive entropy term can often be improved from $\log k$ to $\log\rho(M)$ (or more crudely to $\log\Delta^+$, where $\Delta^+$ is the maximal out-degree).
			In particular, if the transition graph is acyclic, this count is subexponential and the limiting ``entropy'' contribution vanishes.
		\end{itemize}
		
		We do not pursue these refinements here, but the shape-graph viewpoint makes them natural.
	\end{remark}
	
	\subsection{Proof of Theorem~\ref{th-shapes}}
	The acyclicity assumption implies that every time-ordered monomial either (i) stays within a self-loop vertex for long stretches (hence inherits the exponential rate $\beta$ from the corresponding loop cocycle) or (ii) eventually dies by hitting $O_d$.
	Since non-loop transitions cannot repeat indefinitely, their contribution is at most subexponential (controlled using the $\E\log^+\|A_{1,s}\|$ hypotheses).
	The only remaining exponential cost is the number of distinct nonzero monomials, which is bounded by $k_{\rm eff}^n$; when $O_d$ is reachable one can take $k_{\rm eff}=k_*$ as in Theorem~\ref{th-shapes}, yielding the corresponding additive $\log k_{\rm eff}$ term.
	
	\medskip
	We now record the key uniform estimate needed in the proof of part~(a).
	
	\begin{lemma}[Uniform bound for nonzero monomials]\label{lem:uniform_monomial_bound}
		Assume the hypotheses of Theorem~\ref{th-shapes}.  Then for every $\varepsilon>0$ there exists an almost surely finite random constant $K_\varepsilon$ such that for all $n\ge 1$ and every multi-index $(i_1,\ldots,i_n)\in[k]^n$ for which the monomial
		\[
		A_{1,n}(i_1,\ldots,i_n):=A_{n,i_n}A_{n-1,i_{n-1}}\cdots A_{1,i_1}
		\]
		is nonzero, we have
		\[
		\|A_{1,n}(i_1,\ldots,i_n)\|_1 \le K_\varepsilon\,\exp\big((\beta+\varepsilon)n\big).
		\]
	\end{lemma}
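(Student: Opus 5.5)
The plan is to read each nonzero monomial as a path in the shape graph, cut it into at most $|\calv|$ time-segments on which a single self-loop label is repeated, separated by at most $|\calv|$ isolated ``transition'' factors, and bound each piece separately.

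\emph{Step 1 (combinatorial structure).} Fix $(i_1,\dots,i_n)$ with $A_{1,n}(i_1,\dots,i_n)\neq 0$, and set $w_t:=\shape{L_{i_t}\cdots L_{i_1}}\in\calv$, so that $w_{t}=\calt(w_{t-1},L_{i_t})$.
\begin{itemize}
\item Since $\shape{A_{t,i_t}}\vee L_{i_t}=L_{i_t}$, the shape of the partial product $A_{t,i_t}\cdots A_{1,i_1}$ is entrywise dominated by $w_t$. Hence a nonzero monomial never visits $O_d$.
\item The only cycles are self-loops, so the walk $(w_t)$ visits each vertex in one consecutive time-interval. It therefore visits at most $|\calv|$ distinct vertices.
\item If $w_t=w_{t-1}$, then $L_{i_t}$ must be the unique self-loop label $s(w)$ of $w$. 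So during a stay of length $\ge 2$ at $w\in\calh$, all factors after the entering one carry the label $s(w)$.
\end{itemize}
Consequently $[1,n]$ splits into at most $2|\calv|$ consecutive blocks, and the monomial factorises accordingly. The blocks are of two kinds: single transition factors $A_{t,i_t}$ (at most $|\calv|$ of them), and loop segments $A_{b,s}A_{b-1,s}\cdots A_{a,s}$ with $s\in\call_\calh$.

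\emph{Step 2 (transition factors).} Apply Lemma~\ref{lem:bc_log} with $Y_n=\log^+\|A_{n,s}\|_1$ for each $s\in[k]$, using hypothesis~(i). This gives an a.s.\ finite $C_\varepsilon$ with $\|A_{t,s}\|_1\le C_\varepsilon e^{\varepsilon t}\le C_\varepsilon e^{\varepsilon n}$ for all $t\le n$ and all $s$, exactly as in the alternative proof of Lemma~\ref{blockt} above.

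\emph{Step 3 (loop segments; the main obstacle).} The goal is a uniform window bound: for each $s\in\call_\calh$ there is an a.s.\ finite $G_\varepsilon$ such that
\[
\|A_{b,s}\cdots A_{a,s}\|_1\le G_\varepsilon\,e^{\varepsilon a}\,e^{(\beta_s+\varepsilon)(b-a+1)}\qquad\text{for all }1\le a\le b.
\]
The naive route $\|A_{b,s}\cdots A_{a,s}\|\le\|A_{b,s}\cdots A_{1,s}\|\,\|(A_{a-1,s}\cdots A_{1,s})^{-1}\|$ from \eqref{eq:diag-seg-bound} does not work here. For a non-scalar loop cocycle the inverse product grows at rate $-\lambda_{\min}$, not $-\beta_s$, so it loses $\beta_s-\lambda_{\min}$.

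Instead I would use the tempered random constant furnished by the multiplicative ergodic theorem for the invertible, two-sided integrable cocycle $(A_{n,s})$, which is where hypothesis~(ii) enters. This is the Lyapunov-norm (or $\varepsilon$-slowly varying function) construction. It gives $\|A_{b,s}\cdots A_{a,s}\|\le C(\theta^{a-1}\omega)\,e^{(\beta_s+\varepsilon)(b-a+1)}$ with $C(\theta^{m}\omega)\le C(\omega)e^{\varepsilon m}$.

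If one prefers to avoid this machinery, an alternative is a Kingman-type argument. It would show that $\max_{1\le a\le b\le n}\bigl(\log\|A_{b,s}\cdots A_{a,s}\|-(\beta_s+\varepsilon)(b-a+1)\bigr)=o(n)$ a.s., using subadditivity together with Lemma~\ref{lem:bc_log} applied to block products. This window estimate is the step I expect to need the most care.

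\emph{Step 4 (assembly).} Multiply the at most $2|\calv|$ block bounds. The factors $e^{\varepsilon t}$, $e^{\varepsilon a}$ and $e^{\varepsilon(\cdot)}$ contribute at most $e^{3|\calv|\varepsilon n}$. The loop exponents contribute $\exp\bigl(\sum_{\text{segments}}\beta_{s}\,\ell_{\text{seg}}\bigr)\le \exp(\beta L)$, where $L\le n$ is the total loop length.

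If $\beta\ge0$ this is at most $e^{\beta n}$. If $\beta<0$, use $L\ge n-|\calv|$ to get the bound $e^{\beta n}e^{|\beta||\calv|}$.

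Setting $K_\varepsilon:=(C_\varepsilon G_\varepsilon)^{2|\calv|}e^{|\beta||\calv|}$ and replacing $\varepsilon$ by $\varepsilon/(3|\calv|+1)$ yields $\|A_{1,n}(i_1,\dots,i_n)\|_1\le K_\varepsilon e^{(\beta+\varepsilon)n}$. This bound is uniform in $n$ and in the multi-index, because $K_\varepsilon$ depends only on the finitely many labels and on $|\calv|$.
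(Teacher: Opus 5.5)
Your proposal is correct and follows essentially the same route as the paper: split the shape-graph walk of a nonzero monomial into at most $|\calv|$ single transition factors (controlled via Lemma~\ref{lem:bc_log}) and self-loop windows (controlled by a tempered constant from the multiplicative ergodic theorem, shifted along $\theta$), then multiply the bounds. You also use the loop time $L\ge n-|\calv|$ to get $e^{\beta L}\le e^{|\beta||\calv|}e^{\beta n}$ when $\beta<0$. This repairs a step the paper's write-up passes over: there, $\exp\bigl(\sum_q(\beta_{s_q}+\delta)\ell_q\bigr)$ is bounded by $\exp((\beta+\delta)n)$ using only $\sum_q\ell_q\le n$, which fails when $\beta+\delta<0$.
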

	\begin{proof}
		Fix $\varepsilon>0$ and set $m:=|\calv|$.  Let $\delta:=\varepsilon/(4m)$.
		
		Define the stationary sequence
		\[
		Y_n:=\max_{1\le j\le k}\log^+\|A_{n,j}\|_1.
		\]
		Assumption~(i) implies $\E(Y_1)<\infty$.  By Lemma~\ref{lem:bc_log} (applied to $(Y_n)$), there exists an almost surely finite random constant $C_\delta$ such that for all $n\ge 1$ and all $1\le j\le k$,
		\[
		\|A_{n,j}\|_1 \le C_\delta\,e^{\delta n}.
		\]
		(Indeed the above bound holds for all sufficiently large $n$, and we absorb the finitely many remaining $n$ into $C_\delta$.)
		
		Fix a self-loop label $s\in\call_\calh$.  By assumption~(ii), the sequence $(A_{n,s})_{n\ge1}$ is stationary and ergodic, and $A_{n,s}$ is almost surely invertible with
		$\E\log^+\|A_{1,s}\|_1+\E\log^+\|A_{1,s}^{-1}\|_1<\infty$.
		Realize this stationary process as a linear cocycle over an ergodic measure-preserving transformation $\theta$ (so that $A_{n,s}(\omega)=A_{1,s}(\theta^{n-1}\omega)$).
		
		By Oseledets' multiplicative ergodic theorem for integrable linear cocycles (see e.g. \cite{arnold,boug}), for each $\delta>0$ there exists a measurable \emph{tempered} random variable $\widetilde K_{s,\delta}(\omega)<\infty$ such that for all $\ell\ge1$,
		\[
		\big\|A_{\ell,s}(\omega)\cdots A_{1,s}(\omega)\big\|_1\le \widetilde K_{s,\delta}(\omega)\,e^{(\beta_s+\delta)\ell}.
		\]
		Here ``tempered'' means that $\lim_{n\to\infty}\frac1n\log \widetilde K_{s,\delta}(\theta^n\omega)=0$ almost surely.
		In particular, $\sup_{t\ge1}\widetilde K_{s,\delta}(\theta^{t-1}\omega)e^{-\delta t}<\infty$ almost surely; define
		\[
		K_{s,\delta}(\omega):=\sup_{t\ge1}\widetilde K_{s,\delta}(\theta^{t-1}\omega)e^{-\delta t}.
		\]
		Then for all $t\ge1$ and all $\ell\ge1$,
		\begin{equation}\label{eq:loop_block_bound}
			\Big\|A_{t+\ell-1,s}A_{t+\ell-2,s}\cdots A_{t,s}\Big\|_1
			\le K_{s,\delta}(\omega)\,e^{\delta t}\,e^{(\beta_s+\delta)\ell}.
		\end{equation}
		Let $K_{\delta}:=\max_{s\in\call_\calh}K_{s,\delta}$, which is finite almost surely since $\call_\calh$ is finite.
		
		Fix $n$ and a multi-index $(i_1,\ldots,i_n)$ with $A_{1,n}(i_1,\ldots,i_n)\neq 0$.
		The associated walk on the shape graph visits at most $m$ vertices because the graph has no directed cycles other than self-loops.
		Consequently the index set $\{1,\ldots,n\}$ can be partitioned into at most $m$ contiguous \emph{blocks} in which either
		(i) the label is a self-loop label at the current vertex, repeated for the duration of the block, or
		(ii) the block consists of a single non-loop transition (a single time step at which the walk changes vertex).
		
		Let $\ell_1,\ldots,\ell_r$ be the lengths of the self-loop blocks (so $\sum_{q=1}^r\ell_q\le n$ and $r\le m$), and let $t_q$ be the starting time index of the $q$th self-loop block (so $t_q\le n$).
		Using \eqref{eq:loop_block_bound} for each loop block and the bound above for each of the at most $m$ transition steps, we obtain
		\[
		\|A_{1,n}(i_1,\ldots,i_n)\|_1
		\le (C_\delta)^m (K_\delta)^m \,
		\exp\!\Big(\delta m n\Big)\,
		\exp\!\Big(\sum_{q=1}^r (\beta_{s_q}+\delta)\ell_q\Big)\,
		\exp\!\Big(\delta m n\Big),
		\]
		where $s_q$ denotes the self-loop label used in block $q$.
		Since $\beta_{s_q}\le \beta$ and $\sum_{q=1}^r\ell_q\le n$, the middle exponential is bounded by $\exp((\beta+\delta)n)$.
		Absorbing constants, we find
		\[
		\|A_{1,n}(i_1,\ldots,i_n)\|_1
		\le (C_\delta K_\delta)^m \exp\!\big((\beta+\delta+2\delta m)n\big).
		\]
		With our choice $\delta=\varepsilon/(4m)$ we have $\delta+2\delta m\le \varepsilon$, so setting $K_\varepsilon:=(C_\delta K_\delta)^m$ yields
		\[
		\|A_{1,n}(i_1,\ldots,i_n)\|_1 \le K_\varepsilon\,e^{(\beta+\varepsilon)n}.
		\]
		This bound is uniform over all nonzero monomials of length $n$ and all $n\ge1$.
	\end{proof}

	\begin{proof}[Proof of (a)]
		Using the decomposition \eqref{A_ni_def} in $X_n$ and expanding the product, we write
		\beqn \label{Xn_expansion_1}
		X_n = \sum_{i_1=1}^{k} \cdots \sum_{i_n=1}^{k} A_{1,n}(i_1, \ldots, i_n), 
		\quad \text{where} \quad 
		A_{1,n}(i_1, \ldots, i_n) := A_{n,i_n}A_{n-1,i_{n-1}}\cdots A_{1,i_1}.
		\feqn
		Each multi-index $(i_1,\ldots,i_n)$ corresponds to a walk on the shape graph via the successive shapes
		$\shape{L_{i_1}},\, \shape{L_{i_2}L_{i_1}},\,\ldots,\,\shape{L_{i_n}\cdots L_{i_1}}$.
		If $O_d\in\calv$ and this walk ever enters the vertex $O_d$, then the corresponding monomial equals the zero matrix (its support becomes empty), hence contributes nothing to \eqref{Xn_expansion_1}.
		Therefore the sum \eqref{Xn_expansion_1} contains at most $k_{\rm eff}^n$ nonzero monomials, where we set
		\[
		k_{\rm eff}:=
		\begin{cases}
			k, & O_d\notin\calv,\\
			k_*, & O_d\in\calv,
		\end{cases}
		\]
		with $k_*$ as in the statement of the theorem.
		
		Fix $\varepsilon>0$.  By Lemma~\ref{lem:uniform_monomial_bound}, there exists an almost surely finite random constant $K_\varepsilon$ such that for all $n\ge1$ and every nonzero monomial $A_{1,n}(i_1,\ldots,i_n)$ appearing in \eqref{Xn_expansion_1}$,$
		\[
		\| A_{1,n}(i_1, \ldots, i_n) \|_1 \leq K_\varepsilon\, e^{(\beta+\varepsilon) n}.
		\]
		Consequently,
		\[
		\|X_n\|_1 \le k_{\rm eff}^n \, K_\varepsilon\, e^{(\beta+\varepsilon)n}.
		\]
		Taking logarithms, dividing by $n$, and letting $n\to\infty$ yields
		\[
		\gamma_1(\cala)\le \beta+\log k_{\rm eff}+\varepsilon.
		\]
		Since $\varepsilon>0$ is arbitrary, this gives $\gamma_1(\cala)\le \beta+\log k_{\rm eff}$, which is $\beta+\log k$ when $O_d\notin\calv$ and $\beta+\log k_*$ when $O_d\in\calv$.
	\end{proof}
	
	\medskip
	For $v\in\calv$, define the partial sum of monomials whose final shape is $v$ by
	\[
	Z_{n,v}:=\sum_{\substack{(i_1,\dots,i_n)\in[k]^n:\ \shape{L_{i_n}\cdots L_{i_1}}=v}} A_{1,n}(i_1, \ldots, i_n).
	\]
	Then $X_n=\sum_{v\in\calv} Z_{n,v}$.
	
	\begin{proof}[Proof of (b)]
		Fix $w\in\calw$, and let $s\in\call_\calw$ be the label of the self-loop at $w$.
		By the additional assumption in (b), there exists $r=r(w)\ge 1$ such that $\shape{L_s^n}=w$ for all $n\ge r$.
		For $n\ge r$ let $A_{1,n}^{(s)}:=A_{n,s}A_{n-1,s}\cdots A_{1,s}$; then $A_{1,n}^{(s)}$ is one of the monomials contributing to $Z_{n,w}$.
		
		By the assumption $v\wedge w=O_d$ for all $v\in\calv\setminus\{w\}$, the supports of $Z_{n,w}$ and $X_n-Z_{n,w}$ are disjoint, so
		\[
		\langle Z_{n,w},\, X_n-Z_{n,w}\rangle_F=0
		\quad\text{and hence}\quad
		\|X_n\|_F\ge \|Z_{n,w}\|_F.
		\]
		Under the additional hypothesis that the matrices $A_{n,s}$ are entrywise nonnegative a.s., the sum defining $Z_{n,w}$ is entrywise nonnegative and dominates each of its summands. In particular, $Z_{n,w}\ge A_{1,n}^{(s)}$ entrywise and therefore
		\[
		\|Z_{n,w}\|_F\ge \|A_{1,n}^{(s)}\|_F.
		\]
		Since all norms on $\calm_d$ are equivalent, the Lyapunov exponent $\beta_s$ is the same whether defined with $\|\cdot\|$ or $\|\cdot\|_F$, and thus
		\[
		\lim_{n\to\infty}\frac1n\log\|A_{1,n}^{(s)}\|_F=\beta_s.
		\]
		Combining the previous inequalities yields $\beta_s\le \gamma_1(\cala)$.  Taking the maximum over $s\in\call_\calw$ gives the claim.
	\end{proof}
	
	\section{Discussion and outlook}
	\label{sec:discussion}
	
	We close with a brief discussion highlighting how the shape-graph bounds should be interpreted and where one might seek refinements.
	
	\paragraph{Energy--entropy structure and regimes.} In the acyclic settings covered by Theorem~\ref{th-shapes}(a), the upper bound
	\[
	\gamma_1(\cala)\le \beta + \log k
	\qquad (\text{or } \beta+\log k_*\text{ when }O_d\in\calv)
	\]
	splits into two conceptually distinct pieces.
	The term $\beta$ is an ``energy'' contribution: it is the top Lyapunov exponent of the dominant loop cocycle(s), and it captures genuine exponential growth along directions that can persist indefinitely.
	The logarithmic term is an ``entropy'' contribution that accounts for the number of admissible monomials in the product expansion.
	In transfer-matrix language (Example above), $\beta$ corresponds to the best long-run local weight accumulation, while the entropy term corresponds to the proliferation of admissible time-ordered paths.
	
	In many models the entropy term is deliberately crude: if the off-diagonal components are small or rare, then $\gamma_1(\cala)$ is expected to be close to $\beta$, and the theorem quantifies that the discrepancy cannot exceed a logarithmic combinatorial correction.
	Conversely, when many off-diagonal components are simultaneously active and comparable in size to the loop components, the entropy term indicates that purely combinatorial proliferation can shift the top exponent by an $O(1)$ amount.
	
	\paragraph{Possible refinements of the entropy term.}
	As emphasized in Remark~\ref{rem:entropy-refine}, the $\log k$ term comes from counting \emph{all} label sequences.
	A natural refinement is to replace $k$ by an effective growth factor associated with the shape graph itself (e.g.\ a spectral radius, or a maximal branching factor after reachability constraints are enforced).
	Such refinements would make the bound sensitive to the geometry of the sparsity pattern, not only to the number of labels.
	
	\paragraph{Beyond acyclic shape graphs.}
	The self-loop-only hypothesis in Theorem~\ref{th-shapes} is tailored to situations where feedback through off-diagonal transitions is structurally impossible (e.g.\ DAG-supported sparsity).
	Allowing directed cycles would lead to genuinely new behavior: repeated off-diagonal transitions could create additional loop cocycles and may change the exponent in ways not captured by a single $\beta$.
	One possible direction is to analyze strongly connected components of the shape graph and to combine the present methods with subadditive/Kingman-type arguments on the induced component graph.
	
	\paragraph{Overlapping supports and cancellations.}
	A second limitation is the disjoint-support requirement in the definition of a shape set, which eliminates cancellations between distinct components.
	Relaxing this assumption is delicate: cancellations can reduce growth and can also make the expansion-based counting argument too pessimistic.
	It would be interesting to identify intermediate classes (e.g.\ sign-definite entries, monotone cones, or nonnegative matrices) where overlaps are allowed but the combinatorics remains tractable.
	
	\paragraph{Practical use.}
	From a user's perspective, the shape-graph method is most useful as a \emph{diagnostic bound}:
	given a structured model, it provides an explicit, checkable upper bound on the quenched growth rate and highlights which loop components control the exponent.
	Even when the bound is not tight, it can still be valuable for parameter scaling (e.g.\ dependence on temperature $\theta$ in transfer-matrix parametrizations) and for separating structural from probabilistic sources of growth.

	\section*{Acknowledgements}
	The author thanks Alex Roitershtein for helpful discussions and for comments on an earlier draft, in particular for his contribution to the proof of Proposition~\ref{thm101}.


\begin{thebibliography}{100}
		{\small 	
			
			\bibitem{adams}
			F.~C.~Adams, A.~M.~Bloch, and J.~C.~Lagarias,
			\emph{Random Hill’s equations, random walks, and products of random matrices},
			In: A.~Johann, H.~P.~Kruse, F.~Rupp, and S.~Schmitz (eds.),
			\emph{Recent Trends in Dynamical Systems},
			Springer Proceedings in Mathematics \& Statistics, vol. 35,
			Springer, 2013.
			\filbreak
			
			
			\bibitem{ahn}
			A.~Ahn and R.~Van Peski,
			\emph{Lyapunov exponents for truncated unitary and Ginibre matrices},
			Ann. inst. Henri Poincare (B) Probab. Stat. \textbf{59} (2023), 1029--1039).
			\filbreak
			
			\bibitem{aker}
			G.~Akemann, Z.~Burda, and M.~Kieburg,
			\emph{Universal distribution of Lyapunov exponents for products of Ginibre matrices},
			J. Phys. A: Math. Theor. \textbf{47} (2014), 395202.
			\filbreak
			
			
			\bibitem{rankone}
			J.~M.~Altschuler and P.~A.~Parrilo,
			\emph{Lyapunov exponent of rank-one matrices: ergodic formula and inapproximability of the optimal distribution},
			SIAM J. Control Optim. \textbf{58} (2020), 510--528.
			\filbreak
			
			\bibitem{arnold}
			L.~Arnold,
			\emph{Random Dynamical Systems},
			Springer, 1998.
			\filbreak
			
			\bibitem{barl}
			L.~Barreira,
			\emph{Lyapunov Exponents},
			Springer, 2017.
			\filbreak
			
			\bibitem{barr}
			L.~Barreira and C.~Valls,
			\emph{Lyapunov regularity via singular values},
			Trans. Am. Math. Soc. \textbf{369} (2017), 8409--8436.
			\filbreak
			
			\bibitem{boug}
			P.~Bougerol and J.~Lacroix,
			\emph{Products of random matrices with applications to Schr\"{o}dinger operators},
			Birkh\"{a}user, 1985.
			\filbreak
			
			
			\bibitem{cohen}
			J.~E.~Cohen and C.~M.~Newman,
			\emph{The stability of large random matrices and their products},
			Ann. Probab. \textbf{12} (1984), 283--310.
			\filbreak
			
			\bibitem{com}
			A.~Comtet, C.~Texier, and Y.~Tourigny,
			\emph{Products of random matrices and generalized quantum point scatters},
			J. Stat. Phys. \textbf{140} (2010), 427--466.
			\filbreak
			
			\bibitem{comt}
			A.~Comtet, J.~M.~Luck, C.~Texier, and Y.~Tourigny,
			\emph{The Lyapunov exponents of products of random $2\times 2$ matrices close to the identity},
			J. Stat. Phys. \textbf{150} (2013), 13--65.
			\filbreak
			
			
			\bibitem{blockas}
			D.~Dragi\u{c}evi\'{c} and K.~J.~Palmer,
			\emph{Dichotomies for triangular systems via admissibility},
			to appear in J. Dyn. Diff. Equat. (2024).
			\filbreak
			
			\bibitem{elf}
			M.~Embree and L.~N.~Trefethen,
			\emph{Growth and decay of random Fibonacci sequences},
			Proc. Royal Soc. A \textbf{455} (1987), 2471--2485.
			\filbreak
			
			
			\bibitem{fort}
			P.~J.~Forrester,
			\emph{Lyapunov exponents for products of complex Gaussian random matrices},
			J. Stat. Phys. \textbf{151}(2013), 796--808.
			\filbreak
			
			
			\bibitem{ens}
			P.~J.~Forrester and J.~Zhang,
			\emph{Lyapunov exponents for some isotropic random matrix ensembles},
			J. Stat. Phys \textbf{180} (2020), 558--575.
			\filbreak
			
			\bibitem{fuks}
			H.~Furstenberg and H.~Kesten,
			\emph{Products of random matrices},
			Ann. Math. Stat. \textbf{31} (1960), 457--469.
			\filbreak
			
			\bibitem{fifa}
			H.~Furstenberg and Y.~Kifer,
			\emph{Random matrix products and measures on projective spaces},
			Israel J. Math. \textbf{46} (1983), 12--32.
			\filbreak
			
			
			\bibitem{blockus}
			L.~Gerencs\'{e}r, G.~Michaletzky, and Z.~Orlovits,
			\emph{Stability of block-triangular stationary random matrices},
			Syst. Control Lett. \textbf{57} (2008), 620--625.
			\filbreak
			
			
			\bibitem{hen}
			H.~Hennion,
			\emph{Loi des grands nombres et perturbations pour des produits r\'{e}ductibles de matrices al\'{e}atoires ind\'{e}pendantes},
			Z. Wahrscheinlichkeitstheorie verw. Gebiete \textbf{67} (1984), 265--278.
			\filbreak
			
			\bibitem{horn}
			R.~G.~Horn and C.~R.~Johnson,
			\emph{Matrix Analysis}, 2nd edn.,
			Cambridge University Press, 2013.
			\filbreak
			
			\bibitem{fibbo}
			\'{E}.~Janvresse, B.~Rittaud, and T.~de~la~Rue,
			\emph{Almost-sure growth rate of generalized random Fibonacci sequences},
			Ann. Inst. H. Poincar\'{e} Probab. Statist. \textbf{46} (2010), 135--158.
			\filbreak
			
			\bibitem{kargin}
			V.~Kargin,
			\emph{On the largest Lyapunov exponent for products of Gaussian matrices},
			J. Sta.t Phys. \textbf{157} (2014), 70--83.
			\filbreak
			
			\bibitem{keyc}
			E.~Key,
			\emph{Computable examples of the maximal Lyapunov exponent}.
			Probab. Th. Rel. Fields \textbf{75} (1987), 97--107.
			\filbreak
			
			\bibitem{kiev}
			E.~S.~Key,
			\emph{Lyapunov exponents for matrices with invariant subspaces}.
			Ann. Probab. \textbf{16} (1988), 1721--1728.
			\filbreak
			
			\bibitem{kifer}
			Y.~Kifer, 
			\emph{Ergodic Theory of Random Transformations}, 
			Birkh\"{a}user, 1986.
			\filbreak
			
			\bibitem{letac}
			G.~Letac and V.~Seshadri,
			\emph{A characterization of the generalized inverse Gaussian distribution by continued fractions},
			Z. Wahrsch. verw. Gebiete \textbf{62} (1983), 485--489.
			\filbreak
			
			\bibitem{lima}
			R.~Lima and M.~Rahibe,
			\emph{Exact Lyapunov exponent for infinite products of random matrices},
			J. Phys. A: Math. Gen. \textbf{27} (1994), 3427.
			\filbreak
			
			\bibitem{darm}
			R.~Majumdar, P.~Mariano, H.~Panzo, L.~Peng, and A.~Sisti,
			\emph{Lyapunov exponent and variance in the CLT for products of random matrices related to random Fibonacci sequences},
			Discrete Contin. Dyn. Syst. Ser A \textbf{25} (2020), 4779--4799.
			\filbreak
			
			\bibitem{mann}
			D.~Mannion,
			\emph{Products of $2\times 2$ random matrices},
			Ann. Appl. Probab. \textbf{3} (1993), 1189--1218.
			\filbreak
			
			\bibitem{mark}
			J.~Marklof, Y.~Tourigny, and L.~Wo{\l}owski,
			\emph{Explicit invariant measures for products of random matrices},
			Trans. Amer. Math. Soc. \textbf{360} (2008), 3391--3427.
			\filbreak
			
			
			\bibitem{tom}
			T.~Mikosch and D.~Straumann,
			\emph{Stable limits of martingale transforms with application to the estimation of GARCH parameters},
			Ann. Statist. \textbf{34} (2006), 493--522.
			\filbreak
			
			
			
			\bibitem{newman}
			C.~M.~Newman,
			\emph{The distribution of Lyapunov exponents: exact results for random matrices},
			Commun. Math. Phys. \textbf{103} (1986), 121--126.
			\filbreak
			
			
			
			\bibitem{ots}
			N.~Otsuka and T.~Shimizu,
			\emph{Exponential stability of switched block triangular systems under arbitrary switching},
			Linear Multilinear Algebra \textbf{72} (2024), 655--677.
			\filbreak
			
			
			\bibitem{pinkus}
			S.~Pincus,
			\emph{Strong laws of large numbers for products of random matrices},
			Trans. Am. Math. Soc. \textbf{287} (1985), 65--89.
			\filbreak
			
			
			\bibitem{solvable}
			P.~Raj and D.~Pal,
			\emph{Lie-algebraic criterion for stability of switched differential-algebraic equations},
			IFAC-PapersOnLine \textbf{53} (2020), 2004--2009.
			\filbreak
			
			
			\bibitem{new}
			E.~Strickler,
			\emph{Randomly switched vector fields sharing a zero on a common invariant face},
			Stoch. Dynam. \textbf{21} (2021), 2150007.
			\filbreak
			
			
			\bibitem{viana}
			M.~Viana,
			\emph{Lectures on Lyapunov Exponents},
			Cambridge University Press, 2014.
			\filbreak
		}
		
		
	\end{thebibliography}
\end{document}